\journal{Automatica}
\newtheorem{thm}{Theorem}
\newtheorem{lem}{Lemma}
\begin{document}
\begin{frontmatter}

\title{\vspace{-24pt}Output Feedback Control Based on State and Disturbance Estimation}

\author[a]{Wuhua~Hu\corref{cor}}
\ead{w.hu920@gmail.com}

\author[b]{Eduardo~F.~Camacho}
\ead{efcamacho@us.es}

\author[c]{Lihua~Xie}
\ead{elhxie@ntu.edu.sg}

\cortext[cor]{Corresponding author}

\address[a]{Institute for Infocomm Research, Agency for Science,
Technology and Research (A{*}STAR), Singapore}

\address[b]{Department of System Engineering and Automatic Control,
Escuela Superior de Ingenieros of the University of Sevilla, Spain}

\address[c]{School of Electrical and Electronic Engineering, Nanyang Technological
University, Singapore\vspace{-12pt}}

\begin{abstract}
Recently developed control methods with strong disturbance rejection
capabilities provide a useful option for control design. The key lies
in a general concept of disturbance and effective ways to estimate
and compensate the disturbance. 
This work extends the concept of disturbance as the mismatch between a system model and the true dynamics, and estimates and compensates the disturbance for multi-input multi-output linear/nonlinear systems described in a general form. 
{
The results presented do not need to assume the disturbance to be independent of the control inputs or satisfy a certain matching condition, and do not require the system to be expressible in an integral canonical form as required by algorithms previously described in literature. 
}%
The estimator and controller are designed under a state tracking framework, and sufficient conditions for the closed-loop stability are presented.  
The performance of the resulting
controller relies on a co-design
of the system model, the state and disturbance observer, and the controller. Numerical experiments
on a first-order system and an inverted pendulum under uncertainties are used to illustrate
the control design method and demonstrate its efficacy. 
\end{abstract}

\begin{keyword}
Linear systems, nonlinear systems, output feedback, state and disturbance
estimation, disturbance compensation, reference tracking, inverted
pendulum
\end{keyword}

\end{frontmatter}

\section{Introduction} \label{sec: introduction}

Classic and modern control theories rely heavily on the model of the
system under control. The control performance and robustness are
largely determined by fidelity of the model. In practice, it often
requires huge amount of effort to develop
a proper model and then a controller. The model and controller need to compromise
between the design and operation complexity (or cost) and the achievable
performance and robustness. Depending on the chosen trade-offs, the model
mismatch relative to the true system dynamics can be large or small,
and its effects are often tolerated by robust
control \citep{Zhou1998} or mitigated by adaptive control
\citep{aastrom2013adaptive}. The adopted controller also has to balance
various trade-offs involved. So far, proportional-integral-derivative
(PID) control seems to be the only practical option that well balances the trade-offs,
explaining the fact that PID control dominates more than 90\% of the
control application market \citep{O'Dwyer2009,Kano2010}. The situation
is unlikely to be changed until a more competitive method is developed
which can balance common trade-offs in various applications in a more
cost-effective manner.

Seeking for such an alternative method motivates the recent developments
of disturbance rejection based methods for control. These methods
emphasize the central task of control to reject disturbances \citep{johnson1986disturbance,gao2014centrality},
which is consistent with the original motivation of introducing feedback
control \citep{truxall1955automatic}. The new mind breakthrough lies
in a generalized concept of disturbance and effective ways to estimate
and compensate the disturbance. The ideas and related developments
scatter in literature, leading to several useful control methods such
as disturbance accommodating control (DAC) \citep{johnson1968optimal,johnson1970further,johnson1986disturbance},
disturbance observer based control (DOBC) \citep{ohishi1983torque,ohishi1987microprocessor,chen2016disturbance},
active disturbance rejection control (ADRC) \citep{Han1998,Han1999,Gao2006},
uncertainty and disturbance estimator (UDE) based control \citep{Youcef-Toumi1988,Zhong2004,Zhong2011},
model-free control (MFC) \citep{Fliess2009,Fliess2013}, etc. These
methods emerge almost independently, and their close relations begin
to be noticed recently \citep{schrijver2002disturbance,gao2014centrality,chen2016disturbance}.

DAC was introduced by Johnson \citeyearpar{johnson1968optimal,johnson1976theory,johnson1986disturbance}.
The key idea is to treat disturbances as additional states and to estimate them together with the system states using a composite state
observer. The disturbance estimates are used to counteract the
actual disturbances. The closed-loop stability with DAC was proved for systems described in linear forms in which the disturbances may be
explicitly dependent on the system states but not the control inputs. 

DOBC was proposed by Ohnishi and his colleagues \citep{ohishi1983torque,ohishi1987microprocessor}.
It is mainly developed for minimum phase systems because it involves
the inverse of a nominal plant. It may also be modified for non-minimum
phase systems \citep{shim2008new} but with a degraded disturbance
rejection ability \citep{shim2009almost}. Conventional DOBC is limited
for its frequency-domain design which requires disturbances to satisfy
the so-called matching conditions, namely, the disturbances enter
a system via the same channels as control inputs, or can be transformed
into inputs in the same channels as the control inputs by change of coordinates
\citep{schrijver2002disturbance}. The closed-loop stability of a
single-input single-output (SISO) system with DOBC was analyzed in
\citep{shim2009almost}, and some guidelines to tune the observers
were provided in \citep{schrijver2002disturbance,sariyildiz2014guide}. Further development which combines DOBC and existing control methods (e.g., robust control) to handle systems with multiple types of disturbances is also available in the literature. Interested readers are referred to the references \citep{guo2005disturbance,yao2014disturbance} for some relevant studies, and \citep{guo2014anti} for a brief survey.

ADRC was introduced by Han \citeyearpar{Han1995,Han1998,Han1999}.
It relies on the \textit{observation} that many dynamical systems
can be transformed into an integral canonical form as represented by a cascade
of integrators via certain (normally unknown) input-dependent state transformations \citep{Han1981}.
By extracting a nominal model from the canonical form, unmodeled
dynamics are lumped as a\textit{ total disturbance}, which is then
estimated and compensated online. The canonical form is similar to
the one proved by Fliess \citeyearpar{Fliess1990} for general dynamical
systems, and can be viewed as a special form of the canonical form
presented in \citep{Youcef-Toumi1988}. The idea of lumping disturbances
and uncertainties also coincides with that of DAC. In ADRC, the total
disturbance is treated as an additional state and then estimated jointly
with the system states via the so-called extended state observer (ESO)
\citep{Han1995}. The observer is essentially the same as the composite
state observer used in DAC \citep{meditch1974observers,johnson1975observers}.
Despite the overlap of ideas, ADRC does bring a merit for its unique
interest in designing a controller based on the integral canonical form. The theoretical basis
of ADRC is still lacking, however. Related analyses have been mainly on the capacity
of the disturbance observer \citep{Yang2009,guo2011convergence,Zheng2012,Huang2012}
and the closed-loop stability for SISO systems in the integral canonical
form \citep{Zheng2007a}. Some stability results are also available
for certain classes of multi-input multi-output (MIMO) systems \citep{Huang2012,guo2013convergence},
but the generalization to general MIMO systems is yet unclear.

Another idea for rejecting the total disturbance was suggested by
Youcef-Toumi and Ito \citeyearpar{Youcef-Toumi1988}. The authors
proposed the so-called time delay control (TDC) to make a nonlinear
system with uncertainties track reference dynamics. TDC uses past
observation of the disturbance to approximately cancel the current
one. The closed-loop performance is then governed by state feedback
and model reference feedforward controls. Later, Zhong and Rees generalized
TDC by replacing the time-delay filter with a general low pass filter,
resulting in the UDE-based control \citep{Zhong2004,Zhong2011}. To
date, the stability of UDE-based control has been proved for linear
time-invariant (LTI) SISO systems with first-order disturbance filters,
and its application relies on the assumption that all system states
are available.

Another closely related method, MFC, was introduced by Fliess and
Join \citeyearpar{Fliess2009,Fliess2013}. The method approximates
a continuous-time system by a local model within a very short time
period. The model mismatch is estimated and canceled online using
an algebraic identification technique which was proposed in \citep{fliess2004algebraic,fliess2008closed}
and later improved in \citep{Hu2014improved}. Consequently, the local model reduces to a cascade of integrators
for which feedback control design becomes straightforward. By specifying
a first- or second-order local model, MFC enables PID feedback control
to be embedded with direct disturbance rejection capability for output
tracking, yielding the so-called intelligent PID control \citep{Fliess2009,Fliess2013}.
To date, MFC has been studied mainly for SISO systems though its extension
to MIMO systems is thought to be possible \citep{Fliess2013}. Furthermore,
a rigorous stability analysis of MFC is still missing.

Other disturbance observer based control methods can be referred to
a recent survey made in \citep{chen2016disturbance}. All of these
methods are essentially different manifestations of the same philosophy
of feedback control with explicit disturbance estimation and compensation.
The methods differ mainly in how disturbances are defined and how
they are estimated and compensated. Generally speaking, the disturbances
are treated as the mismatch between a nominal model and the true system. The
disturbances can be estimated by a composite/extended state observer
in time domain as used by DAC and ADRC for general cases, or by a
disturbance observer used by DOBC, UDE-based control or MFC for more
restricted cases.

The above methods mostly assumed that the unmodelled dynamics are independent of the control inputs, and concerned with SISO or restricted MIMO systems
with limited stability analyses. Motivated by the indicated limitations, this
work is devoted to developing a new rigorous framework for control of
general linear/nonlinear MIMO systems via output feedback which founds
on the philosophy of disturbance estimation and compensation, and meanwhile adopts a most general interpretation of the disturbance. 
{
In contrast to other methods described in literature, the MIMO system does not need to be in (or transformable into) the form of cascaded integrators, and furthermore, the lumped disturbance includes unmodelled dynamics which can be dependent on the control inputs.
}%
The closed-loop stability is analyzed with
minimum assumptions on the system, model and controller. In addition to the more comprehensive literature review presented above, this work
extends our conference paper presented in the 19th IFAC world congress
\citep{hu2014feedforward} at several aspects. Firstly, the output
feedback design is presented under a more general setting in which
partial knowledge about the system can be incorporated into the system
model. Secondly, the stability analysis of the proposed control is refined and improved. Thirdly, the numerical studies are enhanced for better illustration and validation of the theoretical results. 

\textit{Notation. }Scalars are denoted by normal small/capital letters,
vectors by bold small letters, and matrices by bold capital letters.
The symbol $\mathbf{0}_{n}$ stands for an $n\times1$ full zero vector,
and $\mathbf{O}_{n\times m}$ an $n\times m$ full zero matrix. $\mathbf{O}_{n}$
refers to $\mathbf{O}_{n\times n}$, and $\mathbf{I}_{n}$ represents
an $n\times n$ identity matrix. Matrices $\mathbf{X}^{\top}$ and $\mathbf{X}^{\dagger}$ denote the
transpose and Moore-Penrose pseudoinverse of the matrix $\mathbf{X}$, respectively, and $\sigma_{\min}(\mathbf{X})$
and $\sigma_{\max}(\mathbf{X})$ represent its minimum and maximum
singular values, respectively. $\mathbf{X}\succ \mathbf{O}_n$ means that $\mathbf{X}$ is an $n\times n$ positive definite matrix. $\left\Vert \bullet\right\Vert $ refers
to the 2-norm of a matrix or vector. The arguments of a variable or
function are ignored whenever no ambiguity arises.

\section{Problem formulation \label{sec:Problem-formulation}}

Consider a dynamical system described by 
\begin{equation}
\begin{aligned}\dot{\mathbf{x}}(t) & =\mathbf{f}_{0}(t,\,\mathbf{x},\,\mathbf{u},\,\mathbf{w}_{0}),\\
\mathbf{y}(t) & =\mathbf{g}_{0}(t,\,\mathbf{x},\,\mathbf{u},\,\mathbf{v}_{0}),
\end{aligned}
\label{eq: true system}
\end{equation}
for $t\ge t_{0}$, where $t\in\mathbb{R}$ is the time, and $\mathbf{x}\in\mathbb{R}^{n}$,
$\mathbf{u}\in\mathbb{R}^{m}$, $\mathbf{w}_{0}\in\mathbb{R}^{k_{0}}$,
$\mathbf{y}\in\mathbb{R}^{l}$ and $\mathbf{v}_{0}\in\mathbb{R}^{p_{0}}$,
are vectors of the state, control input, system disturbance, measured
output, and measurement noise, respectively. The dimensions of the
variables satisfy $l\le m\le n$. Since the true state and measurement
functions, $\mathbf{f}_{0}$ and $\mathbf{g}_{0}$, are not exactly
known in practice, we consider a model of the system instead: 
\begin{equation}
\begin{aligned}\dot{\mathbf{x}}(t) & =\mathbf{f}(t,\,\mathbf{x},\,\mathbf{u})+\Gamma\mathbf{w}(t,\,\mathbf{x},\,\mathbf{u},\,\mathbf{w}_{0}),\\
\mathbf{y}(t) & =\mathbf{g}(t,\,\mathbf{x})+\Pi\mathbf{v}(t,\,\mathbf{x},\,\mathbf{u},\,\mathbf{v}_{0}),
\end{aligned}
\label{eq: general model}
\end{equation}
where $\mathbf{w}\in\mathbb{R}^{k}$ ($k\le n$) and $\mathbf{v}\in\mathbb{R}^{p}$
($p\le l$) lump all unmodeled dynamics in the state and measurement
functions, respectively, and $\Gamma\in\mathbb{R}^{n\times k}$ and
$\Pi\in\mathbb{R}^{l\times p}$ are tall matrices with zero
and non-zero elements indicating the sources of the mismatches
for each state and measurement, respectively. In the worst case when $\mathbf{f}_{0}$
is completely unknown and $\mathbf{f}$ is artificially assigned,
we will have $k=n$ and $\Gamma=\mathbf{I}_{n}$. Likewise, when $\mathbf{g}_{0}$
is completely unknown and $\mathbf{g}$ is artificial, we will have
$p=l$ and $\Pi=\mathbf{I}_{l}$. We highlight that the lumped disturbances $\mathbf{w}$ and $\mathbf{v}$ may both be dependent on the control input $\mathbf{u}$, which makes it challenging to design a stabilizing controller and is in sharp contrast to most literature reviewed in the previous section.

Based on the system model \eqref{eq: general model}, the control
problem is stated as follows. The system needs to track a reference
state trajectory, generated by 
\begin{equation}
\dot{\mathbf{x}}_{r}(t)=\mathbf{f}_{r}(t,\,\mathbf{x}_{r},\,\mathbf{u}_{r}),\label{eq: reference state dynamics}
\end{equation}
for $t\ge t_{0}$, where $\mathbf{x}_{r}\in\mathbb{R}^{n}$ is the
reference state trajectory and $\mathbf{u}_{r}\in\mathbb{R}^{m'}$
is the input used to excite the reference system (note that the dimension
of $\mathbf{u}_{r}$ can be different from that of $\mathbf{u}$).
To meet design specifications, the desired tracking error dynamics
is imposed as 
\begin{equation}
\dot{\mathbf{e}}=\mathbf{h}(t,\,\mathbf{e}),\label{eq: target state error dynamics}
\end{equation}
where $\mathbf{e}:=\mathbf{x}_{r}-\mathbf{x}$ which defines the error.
With \eqref{eq: general model} and \eqref{eq: reference state dynamics},
it follows that $\mathbf{f}_{r}(t,\,\mathbf{x}_{r},\,\mathbf{u}_{r})-\mathbf{f}(t,\,\mathbf{x},\,\mathbf{u})-\Gamma\mathbf{w}=\mathbf{h}(t,\,\mathbf{e})$,
from which an \textit{ideal} or \textit{desired} control $\mathbf{u}$ is solved. Since the true
state $\mathbf{x}$ and disturbance $\mathbf{w}$ are unavailable
in practice, they have to be estimated from the measurement $\mathbf{y}$.
Let their estimates be obtained as $\hat{\mathbf{x}}$ and $\hat{\mathbf{w}}$,
respectively, and define $\hat{\mathbf{e}}=\mathbf{x}_{r}-\hat{\mathbf{x}}$.
Then the equation becomes 
\begin{equation}
\mathbf{f}_{r}(t,\,\mathbf{x}_{r},\,\mathbf{u}_{r})-\Gamma\hat{\mathbf{w}}-\mathbf{f}(t,\,\hat{\mathbf{x}},\,\mathbf{u})-\mathbf{h}(t,\,\hat{\mathbf{e}})=\delta_{\mathbf{f}}+\Gamma\delta_{\mathbf{w}}+\delta_{\mathbf{h}},\label{eq: Key eq}
\end{equation}
where $\delta_{\mathbf{f}}:=\mathbf{f}(t,\,\mathbf{x},\,\mathbf{u})-\mathbf{f}(t,\,\hat{\mathbf{x}},\,\mathbf{u})$,
$\delta_{\mathbf{w}}:=\mathbf{w}-\hat{\mathbf{w}}$ and $\delta_{\mathbf{h}}:=\mathbf{h}(t,\,\mathbf{e})-\mathbf{h}(t,\,\hat{\mathbf{e}})$,
which are errors caused by imperfect estimation. As a consequence,
the desired control vector $\mathbf{u}$ has to be estimated from
the design equation \eqref{eq: Key eq}.

The system model in use affects the estimation and also the derived
control. Depending on the information available and the intention
for an affordable design, two cases can be considered: i) \textit{a
state or measurement model is available, and} ii)
\textit{neither a state nor a measurement model is available}. Case
ii) occurs when modeling of the system dynamics is difficult or costly, and can be viewed as an extreme case of i) when the available model is useless. Next
we present a feedback design to tackle Case i).

\section{Feedback control design \label{sec:Feedback-control-design}}

The feedback controller is developed, followed by two types of state
and disturbance estimators.

\subsection{The feedback controller form}

Given a system model in the form of \eqref{eq: general model}, the
desired control $\mathbf{u}$ is estimated from (\ref{eq: Key eq})
subject to (\ref{eq: general model}). The estimation, however, may
be intractable if the model functions $\mathbf{f}$ and $\mathbf{g}$
are nonlinear. In that case, we further approximate the system using
a linear model while exploiting the available information as much
as possible. Hereafter, we assume that $\mathbf{f}$ and $\mathbf{g}$
are linear functions and that $\mathbf{w}$ and $\mathbf{v}$ lump
all model mismatches. With such a system model, it is possible to
derive an estimate of $\mathbf{u}$ in a closed form.

Let the state and measurement functions be given as 
\begin{equation}
\begin{aligned}\mathbf{f}(t,\,\mathbf{x},\,\mathbf{u}) & :=\mathbf{A}\mathbf{x}+\mathbf{B}\mathbf{u},\\
\mathbf{g}(t,\,\mathbf{x}) & :=\mathbf{C}\mathbf{x}+\mathbf{Du},
\end{aligned}
\label{eq: fictitious_system_model}
\end{equation}
where $\mathbf{A}$, $\mathbf{B}$, $\mathbf{C}$ and $\mathbf{D}$
are constant matrices of compatible dimensions, with $(\mathbf{A},\,\mathbf{B})$
being controllable and $(\mathbf{A},\,\mathbf{C})$ being observable.
Then the key equation \eqref{eq: Key eq} becomes{\small{}{} 
\begin{equation}
\mathbf{f}_{r}(t,\,\mathbf{x}_{r},\,\mathbf{u}_{r})-\mathbf{A}\hat{\mathbf{x}}-\Gamma\hat{\mathbf{w}}-\mathbf{h}(t,\,\hat{\mathbf{e}})-\mathbf{B}\mathbf{u}=\mathbf{A}\mathbf{\delta}_{\mathbf{x}}+\Gamma\mathbf{\delta}_{\mathbf{w}}+\delta_{\mathbf{h}}.\label{eq: exemplary-key-equation}
\end{equation}
}The equation embodies a feedforward signal, $\mathbf{f}_{r}(t,\,\mathbf{x}_{r},\,\mathbf{u}_{r})$,
which embeds a reference state trajectory, and two feedback signals,
$\mathbf{A}\hat{\mathbf{x}}$ and $\Gamma\hat{\mathbf{w}}$, which
try to cancel the modeled state dynamics, and another feedback signal
$\mathbf{h}(t,\,\hat{\mathbf{e}})$, which tries to enforce the desired
tracking error dynamics. The control estimate will be coded by all
of these signals.

Depending on how $\hat{\mathbf{x}}$ and $\hat{\mathbf{w}}$ are obtained,
different control estimates can be derived from \eqref{eq: exemplary-key-equation}.
If $\hat{\mathbf{x}}$ or $\hat{\mathbf{w}}$ has an explicit relation
to $\mathbf{u}$, then it is preferred to substitute the relation
into \eqref{eq: exemplary-key-equation} before estimating $\mathbf{u}$:
this will make the estimation of $\hat{\mathbf{x}}$ or $\hat{\mathbf{w}}$
transparent to the implementation, that is, the explicit estimate
will be avoided. 
{
Otherwise, it is straightforward to derive a least-square (LS) estimate of the desired control $\mathbf{u}$
as 
\begin{equation}
\hat{\mathbf{u}}=\mathbf{B}^{\dagger}\left(\mathbf{f}_{r}(t,\,\mathbf{x}_{r},\,\mathbf{u}_{r})-\Gamma\hat{\mathbf{w}}-\mathbf{A}\hat{\mathbf{x}}-\mathbf{h}(t,\,\hat{\mathbf{e}})\right),\label{eq: LS estimate of u}
\end{equation}
which minimizes the 2-norm of the left-hand-side of \eqref{eq: exemplary-key-equation}
(where $\mathbf{B}^{\dagger}:=(\mathbf{B}^{\top}\mathbf{B})^{-1}\mathbf{B}^{\top}$).
Note that $\hat{\mathbf{u}}$ is in general not equal to $\mathbf{u}$ because of the existence of observation errors. 
}%
This control estimate may introduce a bias to the equation even if
$\hat{\mathbf{x}}$ and $\hat{\mathbf{w}}$ are equal to the true
values. This is seen by replacing $\mathbf{u}$ in \eqref{eq: exemplary-key-equation}
with $\hat{\mathbf{u}}$, resulting in a bias {\vspace{-10pt}

\textit{\small{}{} 
\begin{equation}
\mathbf{\delta}_{\mathbf{u}}:=(\mathbf{I}_{n}-\mathbf{B}\mathbf{B}^{\dagger})\left(\mathbf{f}_{r}(t,\,\mathbf{x}_{r},\,\mathbf{u}_{r})-\Gamma\hat{\mathbf{w}}-\mathbf{A}\hat{\mathbf{x}}-\mathbf{h}(t,\,\hat{\mathbf{e}})\right).\label{eq: delta_u}
\end{equation}
}{\small{}}}The bias will be compensated by the next update of control
once it becomes part of the renewed disturbance $\mathbf{w}$. It
can be shown that the bias will be zero if the system model is in
the canonical forms introduced in \citep{Han1981,Youcef-Toumi1988,Fliess1990}.
However, it is worthwhile to mention that a zero bias does not imply
closed-loop stability of the control system, which will be clear from
the stability conditions presented in Section \ref{sec: Closed-loop-stability}.

\subsection{State and disturbance estimators}

Given the state and measurement functions in (\ref{eq: fictitious_system_model}),
the complete system model has the form of 
\begin{equation}
\begin{aligned}\dot{\mathbf{x}} & =\mathbf{A}\mathbf{x}+\mathbf{B}\hat{\mathbf{u}}+\Gamma\mathbf{w}(t,\,\mathbf{x},\,\hat{\mathbf{u}},\,\mathbf{w}_{0}),\\
\mathbf{y} & =\mathbf{C}\mathbf{x}+\mathbf{D}\hat{\mathbf{u}}+\Pi\mathbf{v}(t,\,\mathbf{x},\,\hat{\mathbf{u}},\,\mathbf{v}_{0}),
\end{aligned}
\label{eq: linear-fictitious-system-model}
\end{equation}
which is an LTI system subject to generalized disturbances and measurement
noises. To compute the control $\hat{\mathbf{u}}$ from (\ref{eq: exemplary-key-equation})
or implement it per (\ref{eq: LS estimate of u}), the state $\mathbf{x}$
and the disturbance $\mathbf{w}$ need to be estimated (implicitly
or explicitly) from the measurement $\mathbf{y}$. Depending on the
invertibility of matrix $\mathbf{C}$, two types of estimators can
be used for this purpose. If $\mathbf{C}$ is square and invertible,
then it is feasible to estimate $\mathbf{x}$ by a direct filtering
of $\mathbf{y}$ and further estimate $\mathbf{w}$ by use of another
filtering. This gives the Type-I estimator. Otherwise, an ESO can
be used to estimate $\mathbf{x}$ and $\mathbf{w}$ simultaneously,
resulting in the Type-II estimator. 
{
Type-II estimator is also applicable
when $\mathbf{C}$ is invertible, though its design may be more involved
in time domain compared to the design of Type-I estimator in frequency
domain. In this case, Type-I estimator has another advantage that
the disturbance need not be explicitly estimated.
}%

\textit{Type-I estimator (if $\mathbf{C}$ is invertible).} The state
$\mathbf{x}$ is estimated by directly filtering $\mathbf{y}$ as
$\begin{aligned}\hat{\mathbf{x}}(t) & =\mathbf{C}^{-1}\left(\mathbf{f}_{\mathbf{y}}(t).\star\left(\mathbf{y}(t)-\mathbf{D}\hat{\mathbf{u}}(t)\right)\right)\end{aligned}
$, where $\mathbf{f}_{\mathbf{y}}(t)\in\mathbb{R}^{l}$ is a vector
of impulse responses of filters that suppress the measurement noises
effectively, and $.\star$ denotes the element-wise convolution operator
which operates on two corresponding elements of the two vectors.

With the state estimate, the combined disturbance $\Gamma\mathbf{w}$
can then be estimated by applying filtering on the state equation,
yielding $\Gamma\hat{\mathbf{w}}(t)=\mathbf{f}_{\mathbf{\Gamma w}}(t).\star\left(\dot{\hat{\mathbf{x}}}(t)-\mathbf{A}\hat{\mathbf{x}}(t)-\mathbf{B}\hat{\mathbf{u}}(t)\right)$,
where $\mathbf{f}_{\mathbf{\Gamma w}}(t)\in\mathbb{R}^{n}$ is a vector
of impulse responses of proper filters, satisfying that $\mathbf{f}_{\mathbf{\Gamma w}}(t).\star\dot{\hat{\mathbf{x}}}(t)$
is realizable. (In general, different filters can be applied to the
three terms in the bracket.) Substitute $\Gamma\hat{\mathbf{w}}$
into the key equation (\ref{eq: exemplary-key-equation}), from which
the LS estimate of the desired control is obtained as {

\vspace{-12pt}
{\small{}
\begin{equation}
\hat{\mathbf{u}}(t)=-\mathbf{B}^{\dagger}\left(\mathbf{A}\hat{\mathbf{x}}(t)+\mathcal{L}^{-1}\left(\begin{array}{c}
s\mathbf{J}^{-1}(s)\mathbf{F}_{\mathbf{\Gamma w}}(s)\hat{\mathbf{X}}(s)\\
-s\mathbf{J}^{-1}(s)\mathbf{X}_{r}(s)\\
+\mathbf{J}^{-1}(s)\mathbf{H}_{\hat{\mathbf{e}}}(s)
\end{array}\right)\right),\label{eq: Type I-control-estimate}
\end{equation}
}}where $\mathbf{J}(s):=\mathbf{I}_{n}-\mathbf{F}_{\mathbf{\Gamma w}}(s)$,
and $\mathbf{F}_{\Gamma\mathbf{w}}(s)$ is a diagonal matrix in Laplace
domain whose diagonal elements correspond to the Laplacian transforms
of vector $\mathbf{f}_{\Gamma\mathbf{w}}(t)$; and $\hat{\mathbf{X}}(s)$,
$\mathbf{X}_{r}(s)$ and $\mathbf{H}_{\hat{\mathbf{e}}}(s)$ are the
Laplacian transforms of $\hat{\mathbf{x}}(t)$, $\mathbf{x}_{r}(t)$
and $\mathbf{h}(t,\thinspace\hat{\mathbf{e}})$, respectively; and
$\mathcal{L}^{-1}$ denotes the inverse Laplacian transform. The control
input in (\ref{eq: Type I-control-estimate}) is ready to be implemented,
without the need of explicitly computing the disturbance estimate.

The state and disturbance estimation errors depend on the filters
$\mathbf{f}_{\mathbf{y}}$ and $\mathbf{f}_{\Gamma\mathbf{w}}$ applied.
Designing $\mathbf{f}_{\mathbf{y}}$ properly needs to have prior
knowledge about the noise $\mathbf{v}$ which is usually accessible
in applications where it is independent of the applied control $\mathbf{u}$.
In contrast, designing $\mathbf{f}_{\Gamma\mathbf{w}}$ properly is
non-trivial and deserves particular investigations. This is because
the total disturbance $\Gamma\mathbf{w}$ contains all unmodeled dynamics
and may be dependent on the control input, which makes it difficult
to figure out an appropriate bandwidth for the filter design. Some
related results for linear time-varying systems can be found in \citep{Zhong2011}.

\textit{Type-II estimator (when $\mathbf{C}$ is either invertible
or not).} An ESO is used to estimate the state $\mathbf{x}$ and the
disturbance $\mathbf{w}$, simultaneously. Because the ESO has the
form of a conventional linear state observer, a Type-II estimator
is rather standard except for treating the total disturbance as an
extra state \citep{Han1995}.

Extend the state vector as $\bar{\mathbf{x}}=[\mathbf{x}^{\top}\,\,\mathbf{w}^{\top}]^{\top}$,
which is an $(n+k)\times1$ vector. Then the model equations in (\ref{eq: linear-fictitious-system-model})
can be rewritten as 
\begin{equation}
\begin{aligned}\dot{\bar{\mathbf{x}}} & =\bar{\mathbf{A}}\bar{\mathbf{x}}+\bar{\mathbf{B}}\hat{\mathbf{u}}+\mathbf{E}\tilde{\mathbf{w}},\\
\mathbf{y} & =\bar{\mathbf{C}}\bar{\mathbf{x}}+\mathbf{D}\hat{\mathbf{u}}+\Pi\mathbf{v},
\end{aligned}
\label{eq: fictitious-LTI-model-with-extended-state}
\end{equation}
where $\bar{\mathbf{A}}:=\begin{smallmatrix}\left[\begin{array}{cc}
\mathbf{A} & \Gamma\\
\mathbf{O}_{k\times n} & \mathbf{O}_{k}
\end{array}\right]\end{smallmatrix}$, $\bar{\mathbf{B}}:=\begin{smallmatrix}\left[\begin{array}{c}
\mathbf{B}\\
\mathbf{O}_{k\times m}
\end{array}\right]\end{smallmatrix}$, $\mathbf{E}:=\begin{smallmatrix}\left[\begin{array}{c}
\mathbf{O}_{n\times k}\\
\mathbf{I}_{k}
\end{array}\right]\end{smallmatrix},$ $\bar{\mathbf{C}}:=\begin{smallmatrix}\left[\begin{array}{cc}
\mathbf{C} & \mathbf{O}_{l\times k}\end{array}\right]\end{smallmatrix}$, and $\tilde{\mathbf{w}}:=\dot{\mathbf{w}}$. The derivative $\dot{\mathbf{w}}$
now acts as the uncertainty instead of $\mathbf{w}$. A linear observer
can be used to estimate $\bar{\mathbf{x}}$, as follows: 
\begin{equation}
\begin{aligned}\dot{\hat{\bar{\mathbf{x}}}} & =\bar{\mathbf{A}}\hat{\bar{\mathbf{x}}}+\bar{\mathbf{B}}\hat{\mathbf{u}}+\mathbf{L}(\mathbf{y}-\hat{\mathbf{y}}),\\
\hat{\mathbf{y}} & =\bar{\mathbf{C}}\hat{\bar{\mathbf{x}}}+\mathbf{D}\hat{\mathbf{u}},
\end{aligned}
\label{eq: ESO}
\end{equation}
where $\mathbf{L}\in\mathbb{R}^{(n+k)\times l}$ is selected such
that $\bar{\mathbf{A}}-\mathbf{L}\bar{\mathbf{C}}$ is Hurwitz. The
existence of such a matrix requires observability of the extended
system model in (\ref{eq: fictitious-LTI-model-with-extended-state}). 

Define $\tilde{\mathbf{A}}=\bar{\mathbf{A}}-\mathbf{L}\bar{\mathbf{C}}$,
and $\mathbf{\mathbf{\delta}}_{\bar{\mathbf{x}}}=\bar{\mathbf{x}}-\hat{\bar{\mathbf{x}}}$.
Then, the estimation error $\mathbf{\mathbf{\delta}}_{\bar{\mathbf{x}}}$
is bounded if $\dot{\mathbf{w}}$ and $\mathcal{\mathbf{v}}$ are
bounded. 
\begin{lem}
\label{lm: state observation error bound}\textup{(Bounded estimation
error under bounded disturbances)} If $\left\Vert \dot{\mathbf{w}}\right\Vert \le c_{\dot{\mathbf{w}}}$,
\textup{$\left\Vert \mathcal{\mathbf{v}}\right\Vert \le c_{\mathbf{v}}$}
and $\tilde{\mathbf{A}}$ is Hurwitz, then given an $(n+k)\times(n+k)$
positive definite matrix $\mathbf{Q}$, there exists a finite time
$T_{1}$ ($\ge t_{0}$) such that $\left\Vert \mathbf{\mathbf{\mathbf{\delta}}_{\bar{\mathbf{x}}}}\right\Vert \le\frac{\sigma_{\max}(\mathbf{P})}{\sigma_{\min}(\mathbf{Q})}(c_{\dot{\mathbf{w}}}+c_{\mathbf{v}}\left\Vert \mathbf{L}\Pi\right\Vert )$
for all $t\ge T_{1}$, where $\mathbf{P}$ is a solution to the Lyapunov
equation $\tilde{\mathbf{A}}^{\top}\mathbf{P}+\mathbf{P}\tilde{\mathbf{A}}=-2\mathbf{Q}$. 
\end{lem}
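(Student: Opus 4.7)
The plan is to analyze the linear error dynamics directly and then apply a standard Lyapunov-based ultimate-boundedness argument.

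First I would subtract the observer~\eqref{eq: ESO} from the extended model~\eqref{eq: fictitious-LTI-model-with-extended-state}. The $\bar{\mathbf{B}}\hat{\mathbf{u}}$ and $\mathbf{D}\hat{\mathbf{u}}$ terms cancel, and writing $\mathbf{L}\bar{\mathbf{C}}(\bar{\mathbf{x}}-\hat{\bar{\mathbf{x}}})=\mathbf{L}\bar{\mathbf{C}}\delta_{\bar{\mathbf{x}}}$ leads to the linear time-invariant error equation
\[
\dot{\delta}_{\bar{\mathbf{x}}}=\tilde{\mathbf{A}}\delta_{\bar{\mathbf{x}}}+\mathbf{E}\dot{\mathbf{w}}-\mathbf{L}\Pi\mathbf{v}.
\]
The forcing term is uniformly bounded: because $\mathbf{E}$ stacks a zero block above an identity block, $\mathbf{E}^{\top}\mathbf{E}=\mathbf{I}_{k}$ and hence $\|\mathbf{E}\|=1$; combined with the triangle inequality and the hypotheses $\|\dot{\mathbf{w}}\|\le c_{\dot{\mathbf{w}}}$ and $\|\mathbf{v}\|\le c_{\mathbf{v}}$, this yields $\|\mathbf{E}\dot{\mathbf{w}}-\mathbf{L}\Pi\mathbf{v}\|\le c_{\dot{\mathbf{w}}}+c_{\mathbf{v}}\|\mathbf{L}\Pi\|=:\beta$.

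Next I would introduce the quadratic Lyapunov function $V(\delta_{\bar{\mathbf{x}}}):=\delta_{\bar{\mathbf{x}}}^{\top}\mathbf{P}\delta_{\bar{\mathbf{x}}}$. Since $\tilde{\mathbf{A}}$ is Hurwitz, the Lyapunov equation stated in the lemma has a unique $\mathbf{P}\succ\mathbf{O}_{n+k}$ for the given $\mathbf{Q}\succ\mathbf{O}_{n+k}$. Differentiating $V$ along trajectories and applying Cauchy--Schwarz gives
\[
\dot{V}\le -2\sigma_{\min}(\mathbf{Q})\|\delta_{\bar{\mathbf{x}}}\|^{2}+2\sigma_{\max}(\mathbf{P})\|\delta_{\bar{\mathbf{x}}}\|\beta,
\]
which is strictly negative whenever $\|\delta_{\bar{\mathbf{x}}}\|>r:=\sigma_{\max}(\mathbf{P})\beta/\sigma_{\min}(\mathbf{Q})$. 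A standard ultimate-boundedness result then guarantees that every trajectory enters and remains inside the ball of radius $r$ after some finite time $T_{1}\ge t_{0}$, which matches the bound claimed in the lemma.

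The main obstacle I anticipate is matching this bound exactly as written. A strict level-set argument typically produces an ultimate bound of the form $r\sqrt{\sigma_{\max}(\mathbf{P})/\sigma_{\min}(\mathbf{P})}$, because the smallest positively invariant sublevel set of $V$ containing $\{\|\delta_{\bar{\mathbf{x}}}\|\le r\}$ is $\{V\le\sigma_{\max}(\mathbf{P})r^{2}\}$, which in turn contains a Euclidean ball slightly larger than $r$. To recover the stated bound one either treats $r$ as the threshold where $\dot{V}$ changes sign (so that $\|\delta_{\bar{\mathbf{x}}}\|$ is forced arbitrarily close to $r$ up to a conditioning factor of $\mathbf{P}$), or tightens the argument by an appropriate rescaling of the Lyapunov weight. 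Aside from this conditioning subtlety, the cancellation of $\hat{\mathbf{u}}$ in the error dynamics and the Cauchy--Schwarz bound on $\dot{V}$ are the only essential steps.
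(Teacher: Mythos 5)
Your proof is correct and takes essentially the same route as the paper's, which derives the identical error equation $\dot{\delta}_{\bar{\mathbf{x}}}=\tilde{\mathbf{A}}\delta_{\bar{\mathbf{x}}}+\mathbf{d}$ with $\mathbf{d}=[\mathbf{0}_{n}^{\top}\;\dot{\mathbf{w}}^{\top}]^{\top}-\mathbf{L}\Pi\mathbf{v}$ and then simply cites a standard time-domain robustness result for LTI systems in place of the Lyapunov/ultimate-boundedness argument you spell out (the Lyapunov equation in the lemma statement makes clear this is the intended mechanism). The conditioning factor $\sqrt{\sigma_{\max}(\mathbf{P})/\sigma_{\min}(\mathbf{P})}$ you flag is a real subtlety that the paper's one-line citation glosses over --- the standard level-set argument yields the stated radius only up to that factor --- so your version is, if anything, more precise about what the argument actually delivers.
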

\begin{proof}
The estimation error evolves by $\dot{\mathbf{\mathbf{\delta}}}_{\bar{\mathbf{x}}}=\tilde{\mathbf{A}}\mathbf{\mathbf{\delta}}_{\bar{\mathbf{x}}}+\mathbf{d}$,
where $\mathbf{d}:=[\mathbf{0}_{n}^{\top}\,\,\dot{\mathbf{w}}^{\top}]^{\top}-\mathbf{L}\Pi\mathbf{v}$.
The rest of the proof is completed by applying the results of robust
stability of LTI systems in time domain \citep{patel1980quantitative}. 
\end{proof}

\section{Closed-loop stability \label{sec: Closed-loop-stability}}

The closed-loop stability is analyzed by means of the state tracking
error dynamics. With the system dynamics described in (\ref{eq: linear-fictitious-system-model})
and the control given in (\ref{eq: exemplary-key-equation}), the
tracking error dynamics can be deduced as follows:{

\vspace{-10pt}
{\footnotesize{}
\begin{equation}
\begin{aligned}\dot{\mathbf{e}} & =\dot{\mathbf{x}}_{r}-\dot{\mathbf{x}}=\dot{\mathbf{x}}_{r}-\Gamma\hat{\mathbf{w}}-\mathbf{A}\hat{\mathbf{x}}-\mathbf{B}\hat{\mathbf{u}}-\mathbf{A}\mathbf{\mathbf{\delta}}_{\mathbf{x}}-\Gamma\mathbf{\delta}_{\mathbf{w}}\\
 & =\mathbf{h}(t,\,\hat{\mathbf{e}})+\left(\begin{array}{c}
\mathbf{f}_{r}-\Gamma\hat{\mathbf{w}}-\mathbf{A}\hat{\mathbf{x}}-\mathbf{h}(t,\,\hat{\mathbf{e}})\\
-\mathbf{B}\mathbf{B}^{\dagger}\left(\mathbf{f}_{r}-\Gamma\hat{\mathbf{w}}-\mathbf{A}\hat{\mathbf{x}}-\mathbf{h}(t,\,\hat{\mathbf{e}})\right)-\mathbf{A}\mathbf{\mathbf{\delta}}_{\mathbf{x}}-\Gamma\mathbf{\delta}_{\mathbf{w}}
\end{array}\right)\\
 & =\mathbf{h}(t,\,\hat{\mathbf{e}})+(\mathbf{I}_{n}-\mathbf{B}\mathbf{B}^{\dagger})\left(\mathbf{f}_{r}-\Gamma\hat{\mathbf{w}}-\mathbf{A}\hat{\mathbf{x}}-\mathbf{h}(t,\,\hat{\mathbf{e}})\right)-\mathbf{A}\mathbf{\mathbf{\delta}}_{\mathbf{x}}-\Gamma\mathbf{\delta}_{\mathbf{w}},\\
 & =\mathbf{h}(t,\,\mathbf{e})+\mathbf{\delta}_{\mathbf{u}}-\mathbf{A}\mathbf{\mathbf{\delta}}_{\mathbf{x}}-\Gamma\mathbf{\delta}_{\mathbf{w}}-\mathbf{\delta}_{\mathbf{h}}=\mathbf{h}(t,\,\mathbf{e})+\xi,
\end{aligned}
\label{eq: final-tracking-error-dynamics}
\end{equation}
}}where $\xi:=\mathbf{\delta}_{\mathbf{u}}-\mathbf{A}\mathbf{\mathbf{\delta}}_{\mathbf{x}}-\Gamma\mathbf{\delta}_{\mathbf{w}}-\mathbf{\delta}_{\mathbf{h}}$,
which defines the total design error that lumps the control-estimate-induced
bias and the errors caused by inexact estimation of $\mathbf{x}$
and $\mathbf{w}$. Sufficient conditions for the tracking error to
be bounded are given in the next theorem. 
\begin{thm}
\textup{\label{thm: Weak-closed-loop-stability}(Weak closed-loop
stability)} Let the system model be specified such that $\mathbf{\delta}_{\mathbf{u}}\equiv\mathbf{0}_{n}$.
The closed-loop system described by (\ref{eq: final-tracking-error-dynamics})
is stable in the sense that the state tracking error is bounded, if
the following three conditions are satisfied: 1) the desired error
dynamics, $\dot{\mathbf{e}}=\mathbf{h}(t,\,\mathbf{e})$, is globally
exponentially stable at the origin; 2) the function $\mathbf{h}(t,\,\mathbf{e})$
is continuously differentiable, and there exists a positive scalar
$l_{\mathbf{h}}$ such that $\left\Vert \mathbf{h}(t,\,\mathbf{e}_{1})-\mathbf{h}(t,\,\mathbf{e}_{2})\right\Vert \le l_{\mathbf{h}}\left\Vert \mathbf{e}_{1}-\mathbf{e}_{2}\right\Vert $
for any $t\ge t_{0}$ and $\mathbf{e}_{1},\,\mathbf{e}_{2}$ in the
admissible domain; 3) the disturbance, measurement noise and observer
gain satisfy the conditions in Lemma \ref{lm: state observation error bound}.
More precisely, under these conditions there exists a finite time
$T_{2}$ ($\ge T_{1}\ge t_{0}$) such that the tracking error is bounded
as{

\vspace{-10pt}
{\small{}
\begin{equation}
\left\Vert \mathbf{e}\right\Vert \le c\left\Vert \xi\right\Vert \le\frac{c\sigma_{\max}(\mathbf{P})}{\sigma_{\min}(\mathbf{Q})}(c_{\dot{\mathbf{w}}}+c_{\mathbf{v}}\left\Vert \mathbf{L}\Pi\right\Vert )(l_{\mathbf{h}}+\left\Vert [\mathbf{A}\,\,\Gamma]\right\Vert ),\label{eq: discrepancy_bound}
\end{equation}
}for all $t\ge T_{2}$ and some constant $c$, where the positive
definite matrices $\{\mathbf{P},\thinspace\thinspace\mathbf{Q}\}$
and the constants $\{c_{\dot{\mathbf{w}}},\thinspace\thinspace c_{\mathbf{v}}\}$
are defined in Lemma \ref{lm: state observation error bound}. }{\small \par}
\end{thm}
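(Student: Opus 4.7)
The plan is to read equation (\ref{eq: final-tracking-error-dynamics}) as a perturbed version of the desired error dynamics $\dot{\mathbf{e}}=\mathbf{h}(t,\mathbf{e})$ and to bound the state $\mathbf{e}$ in two stages: first a pointwise bound on the perturbation magnitude $\|\xi\|$ using the observer-error estimate, and then an input-to-state-stability style estimate $\|\mathbf{e}\|\le c\|\xi\|$ coming from the exponential stability of the nominal dynamics.

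For the first stage, since the model is chosen so that $\delta_{\mathbf{u}}\equiv\mathbf{0}_{n}$, only three terms remain in $\xi=-\mathbf{A}\delta_{\mathbf{x}}-\Gamma\delta_{\mathbf{w}}-\delta_{\mathbf{h}}$. The Lipschitz hypothesis on $\mathbf{h}$ together with the identity $\mathbf{e}-\hat{\mathbf{e}}=\delta_{\mathbf{x}}$ gives $\|\delta_{\mathbf{h}}\|\le l_{\mathbf{h}}\|\delta_{\mathbf{x}}\|$, while stacking yields $\|[\mathbf{A}\;\Gamma][\delta_{\mathbf{x}}^{\top}\;\delta_{\mathbf{w}}^{\top}]^{\top}\|\le\|[\mathbf{A}\;\Gamma]\|\,\|\delta_{\bar{\mathbf{x}}}\|$. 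A triangle inequality therefore delivers $\|\xi\|\le(\|[\mathbf{A}\;\Gamma]\|+l_{\mathbf{h}})\|\delta_{\bar{\mathbf{x}}}\|$, and invoking Lemma~\ref{lm: state observation error bound} for $t\ge T_{1}$ produces exactly the second factor appearing on the right-hand side of (\ref{eq: discrepancy_bound}).

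For the second stage, Condition~1 supplies global exponential stability of $\dot{\mathbf{e}}=\mathbf{h}(t,\mathbf{e})$ at the origin, while Condition~2 makes $\mathbf{h}$ continuously differentiable with a global Lipschitz constant. These are the standard hypotheses of the converse Lyapunov theorem for exponential stability (e.g., Khalil, Thm.~4.14), so a Lyapunov function $V(t,\mathbf{e})$ exists with constants $c_{1},c_{2},c_{3},c_{4}>0$ satisfying $c_{1}\|\mathbf{e}\|^{2}\le V\le c_{2}\|\mathbf{e}\|^{2}$, $\partial_{t}V+(\partial_{\mathbf{e}}V)\,\mathbf{h}(t,\mathbf{e})\le-c_{3}\|\mathbf{e}\|^{2}$ and $\|\partial_{\mathbf{e}}V\|\le c_{4}\|\mathbf{e}\|$. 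Evaluating $\dot V$ along (\ref{eq: final-tracking-error-dynamics}) yields $\dot V\le-c_{3}\|\mathbf{e}\|^{2}+c_{4}\|\mathbf{e}\|\,\|\xi\|$, which is the canonical ISS inequality. Choosing $\theta\in(0,1)$ shows $\dot V\le-(1-\theta)c_{3}\|\mathbf{e}\|^{2}$ whenever $\|\mathbf{e}\|\ge (c_{4}/(\theta c_{3}))\|\xi\|$, so a comparison argument produces a finite time $T_{2}\ge T_{1}$ after which $\|\mathbf{e}\|\le c\|\xi\|$ for $c:=\sqrt{c_{2}/c_{1}}\cdot c_{4}/(\theta c_{3})$. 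Substituting the Step~1 bound then gives (\ref{eq: discrepancy_bound}).

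The hard part is the converse Lyapunov step: exponential stability alone does not automatically yield the gradient bound $\|\partial_{\mathbf{e}}V\|\le c_{4}\|\mathbf{e}\|$; one genuinely needs the Lipschitz/$C^{1}$ regularity of $\mathbf{h}$, which is precisely why Condition~2 is imposed. A secondary subtlety is that the Lipschitz estimate on $\mathbf{h}$ is stated only on an \emph{admissible domain}, so one must verify that the perturbed trajectory remains inside this domain long enough for the ISS estimate to take hold (or, equivalently, interpret the conclusion as local boundedness for initial conditions within the domain of attraction). Beyond these points, every other step is a routine triangle-inequality or Gronwall-type calculation.
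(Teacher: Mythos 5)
Your proof is correct and follows essentially the same route as the paper: both first bound $\|\xi\|\le(l_{\mathbf{h}}+\|[\mathbf{A}\,\,\Gamma]\|)\|\delta_{\bar{\mathbf{x}}}\|$ via the Lipschitz property of $\mathbf{h}$ and Lemma~\ref{lm: state observation error bound}, and then invoke input-to-state stability of the perturbed error dynamics under Conditions 1) and 2) (the paper cites Lemma 4.6 of Khalil directly, whereas you unpack its converse-Lyapunov proof). Your added remarks on the gradient bound and the admissible-domain caveat are sound observations about what the citation silently assumes, but they do not change the argument.
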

\begin{proof}
Conditions 1) and 2) imply that the tracking error dynamics described
in (\ref{eq: final-tracking-error-dynamics}) is input-to-state stable
with the input being the total design error $\xi$ (Lemma 4.6 in \citep{khalil2002nonlinear}).
To prove the theorem, it is sufficient to show that $\xi$ is bounded.
Condition 2) implies that $\left\Vert \mathbf{\mathbf{\delta}}_{\mathbf{h}}\right\Vert =\left\Vert \mathbf{h}(t,\,\mathbf{e})-\mathbf{h}(t,\,\hat{\mathbf{e}})\right\Vert \le l_{\mathbf{h}}\left\Vert \mathbf{\mathbf{\delta}}_{\mathbf{x}}\right\Vert \le l_{\mathbf{h}}\left\Vert \mathbf{\mathbf{\delta}}_{\bar{\mathbf{x}}}\right\Vert $.
With $\xi=-[\mathbf{A}\,\,\Gamma]\mathbf{\mathbf{\delta}}_{\bar{\mathbf{x}}}-\mathbf{\mathbf{\delta}}_{\mathbf{h}}$
and condition 3), it follows from Lemma \ref{lm: state observation error bound}
that there exists a finite time $T_{1}$ ($\ge t_{0}$) such that
$\left\Vert \xi\right\Vert \le\frac{\sigma_{\max}(\mathbf{P})}{\sigma_{\min}(\mathbf{Q})}(c_{\dot{\mathbf{w}}}+c_{\mathbf{v}}\left\Vert \mathbf{L}\Pi\right\Vert )(l_{\mathbf{h}}+\left\Vert [\mathbf{A}\,\,\Gamma]\right\Vert )$
for all $t\ge T_{1}$. Then, the specific bound of the tracking error
($\bar{\mathbf{e}}$) can be derived by referring to the proof of
Lemma 4.6 in \citep{khalil2002nonlinear}, which concludes that there
exists a finite time $T_{2}$ ($\ge T_{1}\ge t_{0}$) such that $\left\Vert \mathbf{e}\right\Vert \le c\left\Vert \xi\right\Vert $
for all $t\ge T_{2}$ for some constant $c$.
\end{proof}
{
The result implies that a system model is better if it enables a smaller
total design error $\xi$, which is a synergistic result of the used system model, the extended state observer,
and the controller. 
}

The assumption of $\mathbf{\delta}_{\mathbf{u}}\equiv\mathbf{0}_{n}$
imposes certain restrictions on the system model. Also the assumption
of bounded $\dot{\mathbf{w}}$ and $\mathbf{v}$ may not hold if the
two vectors depend on the control applied. For these reasons, it is
desirable to prove the stability without making these assumptions.

To that end, let us specify the reference error dynamics as: $\dot{\mathbf{e}}=\mathbf{h}(t,\,\mathbf{e}):=\mathbf{Ke}$,
where $\mathbf{K}$ is Hurwitz. Define $\bar{\mathbf{e}}=[\mathbf{\delta}_{\bar{\mathbf{x}}}^{\top}\thinspace\thinspace\mathbf{e}^{\top}]^{\top}$,
which collects the estimation error of the extended state and the
tracking error of the original state, and define $\tilde{\mathbf{B}}=\mathbf{I}_{n}-\mathbf{B}\mathbf{B}^{\dagger}$.
The extended error dynamics are then derived as follows (refer to
\ref{sec:Deriving-the-closed-loop}): 
\begin{equation}
\dot{\bar{\mathbf{e}}}=(\mathbf{H}+\Delta_{t})\bar{\mathbf{e}}+\mathbf{\delta}_{t},\label{eq: estimation and tracking error dynamics}
\end{equation}
where the matrices $\mathbf{H}$ and $\Delta_{t}$, and the vector
$\delta_{t}$ are defined as in \eqref{eq: H}-\eqref{eq: delta}.
The subscript $t$ indicates that $\Delta_{t}$ and $\delta_{t}$
may be time-varying. The closed-loop stability can then be established
without making the aforementioned assumptions. 

\begin{figure*}
\begin{align}
\mathbf{H}= & \left[\begin{array}{cc}
\tilde{\mathbf{A}} & \mathbf{O}_{(n+k)\times n}\\
-\mathbf{B}\mathbf{B}^{\dagger}[\mathbf{A}-\mathbf{K}\,\,\Gamma] & \mathbf{A}+\mathbf{B}\mathbf{B}^{\dagger}(\mathbf{K}-\mathbf{A})
\end{array}\right],\thinspace\thinspace\Delta_{t}=\left[\begin{array}{cc}
\mathbf{O}_{n\times(n+k)} & \mathbf{O}_{n}\\
\frac{\partial\mathbf{w}}{\partial\hat{\mathbf{u}}}\mathbf{B}^{\dagger}[\begin{array}{cc}
\mathbf{A}-\mathbf{K} & \Gamma\end{array}] & \frac{\partial\mathbf{w}}{\partial\hat{\mathbf{u}}}\mathbf{B}^{\dagger}(\mathbf{A}-\mathbf{K})-\frac{\partial\mathbf{w}}{\partial\mathbf{x}}\mathbf{A}\\
\mathbf{O}_{n\times(n+k)} & \mathbf{O}_{n}
\end{array}\right]\label{eq: H}\\
\delta_{t}= & \left[\begin{array}{c}
\left[\begin{array}{c}
\mathbf{0}_{n}\\
\left(\frac{\partial\mathbf{w}}{\partial\mathbf{x}}-\frac{\partial\mathbf{w}}{\partial\hat{\mathbf{u}}}\mathbf{B}^{\dagger}\right)\mathbf{A}\mathbf{x}_{r}+\frac{\partial\mathbf{w}}{\partial\hat{\mathbf{u}}}\mathbf{B}^{\dagger}\dot{\mathbf{x}}_{r}+\frac{\partial\mathbf{w}}{\partial t}+\frac{\partial\mathbf{w}}{\partial\mathbf{w}_{0}}\dot{\mathbf{w}}_{0}+\frac{\partial\mathbf{w}}{\partial\mathbf{x}}\mathbf{B}\hat{\mathbf{u}}+\left(\frac{\partial\mathbf{w}}{\partial\mathbf{x}}-\frac{\partial\mathbf{w}}{\partial\hat{\mathbf{u}}}\mathbf{B}^{\dagger}\right)\Gamma\mathbf{w}
\end{array}\right]-\mathbf{L}\Pi\mathbf{v}\\
\tilde{\mathbf{B}}(\dot{\mathbf{x}}_{r}-\mathbf{Ax}_{r})-\tilde{\mathbf{B}}\Gamma\mathbf{w}
\end{array}\right].\label{eq: delta}
\end{align}
\end{figure*}

\begin{thm}
\noindent \textup{\label{thm: Closed-loop-stability}(Closed-loop
stability)} The closed-loop system described by (\ref{eq: estimation and tracking error dynamics})
is stable in the sense that the estimation and tracking errors are
bounded if the following conditions are satisfied:\\
 1) the model mismatch functions $\mathbf{w}$, $\mathbf{v}$, and
the partial derivatives of $\mathbf{w}$ satisfy the following inequalities,

\noindent 
\begin{gather*}
\left\Vert \mathbf{w}(t,\,\mathbf{x},\,\mathbf{u},\,\mathbf{w}_{0})\right\Vert \le l_{\mathbf{w}}+l_{\mathbf{w}}^{\mathbf{x}}\left\Vert \mathbf{x}\right\Vert +l_{\mathbf{w}}^{\mathbf{u}}\left\Vert \mathbf{u}\right\Vert +l_{\mathbf{w}}^{\mathbf{w}0}\left\Vert \mathbf{w}_{0}\right\Vert ,\\
\left\Vert \dfrac{\partial\mathbf{w}(t,\,\mathbf{x},\,\mathbf{u},\,\mathbf{w}_{0})}{\partial t}\right\Vert \le l_{\partial\mathbf{w}},\thinspace\thinspace\left\Vert \dfrac{\partial\mathbf{w}(t,\,\mathbf{x},\,\mathbf{u},\,\mathbf{w}_{0})}{\partial\mathbf{x}}\right\Vert \le l_{\partial\mathbf{w}}^{\mathbf{x}},\\
\left\Vert \frac{\partial\mathbf{w}(t,\,\mathbf{x},\,\mathbf{u},\,\mathbf{w}_{0})}{\partial\mathbf{u}}\right\Vert \le l_{\partial\mathbf{w}}^{\mathbf{u}},\thinspace\thinspace\left\Vert \dfrac{\partial\mathbf{w}(t,\,\mathbf{x},\,\mathbf{u},\,\mathbf{w}_{0})}{\partial\mathbf{w}_{0}}\right\Vert \le l_{\partial\mathbf{w}}^{\mathbf{w}_{0}},\\
\left\Vert \mathbf{v}(t,\,\mathbf{x},\,\mathbf{u},\,\mathbf{v}_{0})\right\Vert \le l_{\mathbf{v}}+l_{\mathbf{v}}^{\mathbf{x}}\left\Vert \mathbf{x}\right\Vert +l_{\mathbf{v}}^{\mathbf{u}}\left\Vert \mathbf{u}\right\Vert +l_{\mathbf{v}}^{\mathbf{v}0}\left\Vert \mathbf{v}_{0}\right\Vert 
\end{gather*}

\noindent for all $t\ge t_{0}$, where $l_{\mathbf{w}}$, $l_{\mathbf{w}}^{\mathbf{x}/\mathbf{u}/\mathbf{w}_{0}}$,
$l_{\partial\mathbf{w}}$, $l_{\partial\mathbf{w}}^{\mathbf{x}/\mathbf{u}/\mathbf{w}_{0}}$,
$l_{\mathbf{v}}$ and $l_{\mathbf{v}}^{\mathbf{x}/\mathbf{u}/\mathbf{v}_{0}}$
are non-negative constants;\\
 2) the reference state and its derivative are bounded for all $t\ge t_{0}$,
i.e., $\left\Vert \mathbf{x}_{r}\right\Vert \le c_{\mathbf{x}r}$
and $\left\Vert \dot{\mathbf{x}}_{r}\right\Vert \le c_{\dot{\mathbf{x}}r}$
for certain positive constants $c_{\mathbf{x}r}$ and $c_{\dot{\mathbf{x}}r}$;
\\
 3) the original disturbance and its derivative are bounded for all
$t\ge t_{0}$, i.e., $\left\Vert \mathbf{w}_{0}\right\Vert \le c_{\mathbf{w}0}$
and $\left\Vert \dot{\mathbf{w}}_{0}\right\Vert \le c_{\dot{\mathbf{w}}0}$
for some non-negative constants $c_{\mathbf{w}0}$ and $c_{\dot{\mathbf{w}}0}$,
and the original noise is bounded for all $t\ge t_{0}$, i.e., $\left\Vert \mathbf{v}_{0}\right\Vert \le c_{\mathbf{v}0}$
for a certain non-negative constant $c_{\mathbf{v}0}$, and so is
the control action, that is, $\left\Vert \hat{\mathbf{u}}\right\Vert \le c_{\mathbf{u}}$
for some positive constant $c_{\mathbf{u}}$; \\
{
 4) the observer gain matrix $\mathbf{L}$ is such that $\tilde{\mathbf{A}}$
is Hurwitz, and the gain matrix $\mathbf{K}$ of the desired tracking
error dynamics is such that $\mathbf{A}+\mathbf{B}\mathbf{B}^{\dagger}(\mathbf{K}-\mathbf{A})$
is Hurwitz, and meanwhile the following condition is satisfied
\begin{equation}
2\mathbf{M}-\Delta{}_{t}^{\top}\mathbf{N}-\mathbf{N}\Delta_{t}-2\beta_{1}\sigma_{\max}(\mathbf{N})\mathbf{I}_{2n+k}\succ\mathbf{O}_{2n+k},\label{eq: key condition for thm 2}
\end{equation}
for some $\mathbf{N}\succ\mathbf{O}_{2n+k}$ which is the solution
of the Lyapunov equation $\mathbf{H}^{\top}\mathbf{N}+\mathbf{N}\mathbf{H}=-2\mathbf{M}$
with a matrix $\mathbf{M}\succ\mathbf{O}_{2n+k}$. Here, $\beta_{1}:=l_{\mathbf{v}}^{\mathbf{x}}\left\Vert \mathbf{L}\Pi\right\Vert +l_{\mathbf{w}}^{\mathbf{x}}(l_{\partial\mathbf{w}}^{\mathbf{x}}\left\Vert \Gamma\right\Vert +l_{\partial\mathbf{w}}^{\mathbf{u}}\left\Vert \mathbf{B}^{\dagger}\Gamma\right\Vert +\left\Vert \tilde{\mathbf{B}}\Gamma\right\Vert )$.
}%
\end{thm}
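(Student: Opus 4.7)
The plan is to prove ultimate boundedness of $\bar{\mathbf{e}}$ via a quadratic Lyapunov function, treating $\Delta_{t}\bar{\mathbf{e}}+\delta_{t}$ in (\ref{eq: estimation and tracking error dynamics}) as a perturbation of the nominal Hurwitz dynamics driven by $\mathbf{H}$. First I would observe that $\mathbf{H}$ in (\ref{eq: H}) is block lower triangular with diagonal blocks $\tilde{\mathbf{A}}$ and $\mathbf{A}+\mathbf{B}\mathbf{B}^{\dagger}(\mathbf{K}-\mathbf{A})$, both Hurwitz by condition 4, so $\mathbf{H}$ itself is Hurwitz and for any $\mathbf{M}\succ\mathbf{O}_{2n+k}$ the Lyapunov equation $\mathbf{H}^{\top}\mathbf{N}+\mathbf{N}\mathbf{H}=-2\mathbf{M}$ admits a unique positive definite solution $\mathbf{N}$. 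Taking $V(\bar{\mathbf{e}}):=\bar{\mathbf{e}}^{\top}\mathbf{N}\bar{\mathbf{e}}$ and differentiating along (\ref{eq: estimation and tracking error dynamics}) gives $\dot{V}=-\bar{\mathbf{e}}^{\top}(2\mathbf{M}-\Delta_{t}^{\top}\mathbf{N}-\mathbf{N}\Delta_{t})\bar{\mathbf{e}}+2\bar{\mathbf{e}}^{\top}\mathbf{N}\delta_{t}$.

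The second step is to bound $\left\Vert \delta_{t}\right\Vert $ affinely in $\left\Vert \bar{\mathbf{e}}\right\Vert $. Writing $\mathbf{x}=\mathbf{x}_{r}-\mathbf{e}$ and using condition 1, each occurrence of $\left\Vert \mathbf{w}\right\Vert $ or $\left\Vert \mathbf{v}\right\Vert $ inside $\delta_{t}$ splits into an affine function of $\left\Vert \mathbf{e}\right\Vert $ plus a constant bounded via conditions 2 and 3, while the partial derivatives of $\mathbf{w}$ and the drivers $\dot{\mathbf{x}}_{r},\dot{\mathbf{w}}_{0},\hat{\mathbf{u}}$ are uniformly bounded by conditions 1--3. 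Reading off the $\left\Vert \mathbf{e}\right\Vert $-linear coefficients from the three blocks of $\delta_{t}$ in which $\mathbf{w}$ appears (multiplied by $(\partial\mathbf{w}/\partial\mathbf{x})\Gamma$, $(\partial\mathbf{w}/\partial\hat{\mathbf{u}})\mathbf{B}^{\dagger}\Gamma$ and $\tilde{\mathbf{B}}\Gamma$) and from the $-\mathbf{L}\Pi\mathbf{v}$ piece reproduces precisely the constant $\beta_{1}$ of the theorem, yielding $\left\Vert \delta_{t}\right\Vert \le\beta_{1}\left\Vert \bar{\mathbf{e}}\right\Vert +\beta_{0}$ for some $\beta_{0}\ge 0$ depending only on problem data.

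Combining the two previous steps via $2\bar{\mathbf{e}}^{\top}\mathbf{N}\delta_{t}\le 2\sigma_{\max}(\mathbf{N})(\beta_{1}\left\Vert \bar{\mathbf{e}}\right\Vert ^{2}+\beta_{0}\left\Vert \bar{\mathbf{e}}\right\Vert )$ produces $\dot{V}\le-\bar{\mathbf{e}}^{\top}\bigl(2\mathbf{M}-\Delta_{t}^{\top}\mathbf{N}-\mathbf{N}\Delta_{t}-2\beta_{1}\sigma_{\max}(\mathbf{N})\mathbf{I}_{2n+k}\bigr)\bar{\mathbf{e}}+2\sigma_{\max}(\mathbf{N})\beta_{0}\left\Vert \bar{\mathbf{e}}\right\Vert $. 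Condition (\ref{eq: key condition for thm 2}) forces the quadratic form to be at most $-\alpha\left\Vert \bar{\mathbf{e}}\right\Vert ^{2}$ for some $\alpha>0$, so $\dot{V}<0$ whenever $\left\Vert \bar{\mathbf{e}}\right\Vert >2\sigma_{\max}(\mathbf{N})\beta_{0}/\alpha$, and standard ultimate-boundedness arguments (e.g.\ Theorem 4.18 in \citep{khalil2002nonlinear}) deliver bounded estimation and tracking errors.

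The main obstacle will be the bookkeeping in the second paragraph: $\delta_{t}$ in (\ref{eq: delta}) contains several blocks where partial derivatives of $\mathbf{w}$ multiply $\mathbf{x}$, $\mathbf{w}$, $\hat{\mathbf{u}}$ or $\dot{\mathbf{x}}_{r}$, and every state-dependent factor must be re-expressed through $\mathbf{x}=\mathbf{x}_{r}-\mathbf{e}$ so that the $\left\Vert \bar{\mathbf{e}}\right\Vert $-linear contributions assemble \emph{exactly} into $\beta_{1}$ rather than a looser expression. A secondary subtlety is that $\Delta_{t}$ in (\ref{eq: H}) depends on the same partial derivatives evaluated along the closed-loop trajectory, so (\ref{eq: key condition for thm 2}) is really a worst-case condition over all admissible $\Delta_{t}$; conditions 1 and 3 are what make it verifiable a priori, and the resulting coupling between $\mathbf{L}$, $\mathbf{K}$ and the Lipschitz data of $\mathbf{w},\mathbf{v}$ is the real design constraint the theorem imposes.
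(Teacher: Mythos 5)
Your proposal is correct and follows essentially the same route as the paper: the same Lyapunov function $V(\bar{\mathbf{e}})=\bar{\mathbf{e}}^{\top}\mathbf{N}\bar{\mathbf{e}}$ with $\mathbf{N}$ solving the Lyapunov equation for the block-triangular Hurwitz $\mathbf{H}$, the same affine bound $\left\Vert \delta_{t}\right\Vert \le\beta_{1}\left\Vert \bar{\mathbf{e}}\right\Vert +\text{const}$ obtained by substituting $\mathbf{x}=\mathbf{x}_{r}-\mathbf{e}$ into the growth bounds of condition 1, and the same absorption of the $\beta_{1}$ term into the quadratic form to conclude ultimate boundedness from \eqref{eq: key condition for thm 2}. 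Your bookkeeping of where $\beta_{1}$ comes from (the three $\Gamma\mathbf{w}$ channels plus the $\mathbf{L}\Pi\mathbf{v}$ term) matches the paper's appendix computation exactly.
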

\begin{proof}
Consider the closed-loop system $\dot{\bar{\mathbf{e}}}=\mathbf{H}\bar{\mathbf{e}}+\Delta_{t}\bar{\mathbf{e}}+\delta_{t}$.
Since $\tilde{\mathbf{A}}$ and $\mathbf{A}+\mathbf{B}\mathbf{B}^{\dagger}(\mathbf{K}-\mathbf{A})$
are both Hurwitz, it follows that $\mathbf{H}$ is also Hurwitz by
definition. Given $\mathbf{M}\succ\mathbf{O}_{2n+k}$, introduce the
Lyapunov function $V(\bar{\mathbf{e}})=\bar{\mathbf{e}}^{\top}\mathbf{N}\bar{\mathbf{e}}$,
where the positive definite matrix $\mathbf{N}$ is a unique solution
to the Lyapunov equation $\mathbf{H}^{\top}\mathbf{N}+\mathbf{N}\mathbf{H}=-2\mathbf{M}$.
Then the derivative of $V(t,\,\mathbf{e})$ is deduced as in \eqref{eq: proof}
in the Appendix. By condition 4), $\dot{V}<0$ if $\left\Vert \bar{\mathbf{e}}\right\Vert $
is large enough (the worst-case threshold value of which can be computed
from the last inequality of \eqref{eq: proof}). This implies that
the error $\bar{\mathbf{e}}$ will be bounded, and hence completes
the proof.
\end{proof}
\textcolor{black}{Condition 1) of Theorem \ref{thm: Closed-loop-stability}
requires the model mismatches not to change too fast, so that it is
possible to suppress these mismatches by compensation and control.
Condition 2) on the bounded reference state and its derivative can
be satisfied via an appropriate design of the reference system. Condition
3) is not restrictive since unbounded control or disturbance is not
allowed in normal operations. Condition 4) is thus a key requirement
to enable the closed-loop stability, which roughly means that } 
\begin{itemize}
\item \textcolor{black}{on one hand, the observer and the reference tracking
error (as governed by $\mathbf{H}$) should have fast convergent dynamics
such that $\frac{\sigma_{\min}(\mathbf{M})}{\sigma_{\max}(\mathbf{N})}$
is large (note that $\mathbf{N}=2\int_{t_{0}}^{\infty}e^{\mathbf{H}^{\top}t}\mathbf{M}e^{\mathbf{H}t}dt$
is an explicit solution to the Lyapunov equation \citep{Chen1999}),
and } 
\item \textcolor{black}{on the other hand, the model mismatches (as encoded
by $\mathbf{w}$ and $\mathbf{v}$) should have slow dynamics such
that the factor $\beta_{1}$, i.e, }\textcolor{black}{\small{}$\begin{array}{c}
l_{\mathbf{v}}^{\mathbf{x}}||\mathbf{L}\Pi||+l_{\mathbf{w}}^{\mathbf{x}}(l_{\partial\mathbf{w}}^{\mathbf{x}}||\Gamma||+l_{\partial\mathbf{w}}^{\mathbf{u}}||\mathbf{B}^{\dagger}\Gamma||+||\tilde{\mathbf{B}}\Gamma||)\end{array}$}\textcolor{black}{, is small. } 
\end{itemize}
Since a gain matrix $\mathbf{L}$ enlarging \textcolor{black}{$\frac{\sigma_{\min}(\mathbf{M})}{\sigma_{\max}(\mathbf{N})}$}
may inflate $\left\Vert \mathbf{L}\Pi\right\Vert $ at the same time,
the above conditions indicate an intrinsic trade-off in the observer
design.
 
{
We remark that the reference dynamics can only be approximately achieved in the control design due to the various errors involved, and that the stability of the closed-loop system does not lie in the exact matching with the reference specifications.
}%
We also remark that the derived stability conditions rely on the assumption
of using a first-order ESO. The conditions may be relaxed if a higher-order
ESO which treats the higher-order derivative(s) of $\mathbf{w}$ as
additional state(s) is employed to estimate the disturbance \citep{johnson1975observers,Miklosovic2006,Madonski2013}.
Moreover, to deal with measurement noise, the measurements can be filtered
before use if the cost incurred is mild compared to the benefit.

\section{Numerical examples \label{sec:Numerical-example}}

Firstly, a first-order system with uncertainty is used to illustrate
the proposed control design and validate the stability conditions
presented in Theorem \ref{thm: Closed-loop-stability}. Then, the
proposed approach is applied to design controllers for an inverted
pendulum, and the control performances subjected to different levels of model mismatches are
examined via simulations.

\subsection{Control of a first-order uncertain system} \label{subsec: example1}

Consider the following first-order system model in integral canonical
form:
\begin{equation}
\dot{x}=2x+3u+w,\thinspace\thinspace y=x+v,\thinspace\thinspace t\ge0,\label{eq: example 1}
\end{equation}
where $x$ and $u$ are the state and control, respectively, and $w$
and $v$ are the unknown parts of the model. Suppose that $w=0.2x+0.3u+w_{0}$,
where $w_{0}=0.1\sin t$. Thus we have, $l_{w}=l_{\partial w}=0$,
$l_{w}^{x}=l_{\partial w}^{x}=0.2$, $l_{w}^{u}=l_{\partial w}^{u}=0.3$,
$l_{w}^{w0}=l_{\partial w}^{w0}=1$, $c_{w0}=c_{\dot{w}0}=0.1$. And
suppose that $v=v_{0}$, where $v_{0}$ is a zero-mean Gaussian noise
with a variance equal to $0.01$ and the value of $v_{0}$ is truncated
to the range of {[}-0.1, 0.1{]}. Then we have, $l_{v}=l_{v}^{x}=l_{v}^{u}=0$,
$l_{v}^{v0}=1$ and $c_{v0}=0.1$. The goal is to design the control
$u$, satisfying $|u|\le c_{u}:=5$, based on the measurement $y$
such that the state $x$ is stabilized at the point of $1$.

Specify the reference state dynamics as: $\dot{x}_{r}=-kx_{r}+ku_{r}$,
where $u_{r}$ is a unit step signal and $k$ is a given positive
number. And specify the reference state tracking error dynamics as:
$\dot{e}_{r}=-ke_{r}$. Then, the desired control can be estimated
as
\begin{equation}
\hat{u}=[ku_{r}-(k+2)\hat{x}-\hat{w}]/3.\label{eq: control for example 1}
\end{equation}
Define $\hat{\bar{\mathbf{x}}}=[\hat{x},\thinspace\thinspace\hat{w}]^{\top}$.
The two estimates are obtained from the observer: $\dot{\hat{\bar{\mathbf{x}}}}=\left[\begin{array}{cc}
2 & 1\\
0 & 0
\end{array}\right]\hat{\bar{\mathbf{x}}}+\left[\begin{array}{c}
3\\
0
\end{array}\right]u+\bar{\mathbf{l}}(y-\hat{y})$, $\hat{y}=[1,\thinspace\thinspace0]\hat{\bar{\mathbf{x}}}$, where
$\bar{\mathbf{l}}$ is the $2\times1$ observer gain vector. The
matrices used in the stability analysis are

\begin{align*}
\tilde{\mathbf{A}} & =\left[\begin{array}{cc}
2 & 1\\
0 & 0
\end{array}\right]-\bar{\mathbf{l}}[1,\thinspace\thinspace0],\thinspace\thinspace\mathbf{H}=\left[\begin{array}{cc}
\tilde{\mathbf{A}} & \mathbf{0}_{2}\\
-[k+2\,\,1] & -k
\end{array}\right],\\
\Delta & =0.1\cdot\left[\begin{array}{ccc}
0 & 0 & 0\\
k+2 & 1 & k-2\\
0 & 0 & 0
\end{array}\right],\thinspace\thinspace\Gamma=\Pi=1.
\end{align*}
By Theorem \ref{thm: Closed-loop-stability}, it suffices to design
the gain vector $\bar{\mathbf{l}}$ such that $\tilde{\mathbf{A}}$
is Hurwitz and $2\mathbf{M}-\Delta{}^{\top}\mathbf{N}-\mathbf{N}\Delta-0.12\sigma_{\max}(\mathbf{N})\mathbf{I}_{3}\succ\mathbf{O}_{3}$,
where $\mathbf{N}$ is the solution to the Lyapunov equation $\mathbf{H}^{\top}\mathbf{N}+\mathbf{N}\mathbf{H}=-2\mathbf{M}$
for certain $\mathbf{M}\succ\mathbf{O}_{3}$.

Set $\bar{\mathbf{l}}=[6k+2,\thinspace\thinspace9k^{2}]^{\top}$.
Then the two poles of the observer are both equal to $-3k$. With
$\mathbf{M}=\mathbf{I}_{3}$, numerical computation shows that the
aforementioned stability condition is satisfied for all $0.8\le k\le4.1$.
For instance, with $k=1.5$, the minimum eigenvalue of $2\mathbf{M}-\Delta{}^{\top}\mathbf{N}-\mathbf{N}\Delta-0.12\sigma_{\max}(\mathbf{N})\mathbf{I}_{3}$
is obtained as 0.86, which implies the positive definiteness of the
matrix. Consequently, the closed-loop system is stable by Theorem
\ref{thm: Closed-loop-stability}. This is verified by the simulation
results shown in Fig. \ref{fig: results of example 1}. Simulations also showed that a larger $k$ will lead to a faster convergence to the reference state
but at the cost of a more oscillating control to counteract the more
serious effect of measurement noise, and that
the obtained range of $k$ is sufficient but not necessary for the
closed-loop stability. These results are not shown due to page limit. 
\noindent \begin{center}
\begin{figure}
\noindent \begin{centering}
\includegraphics[scale=0.48]{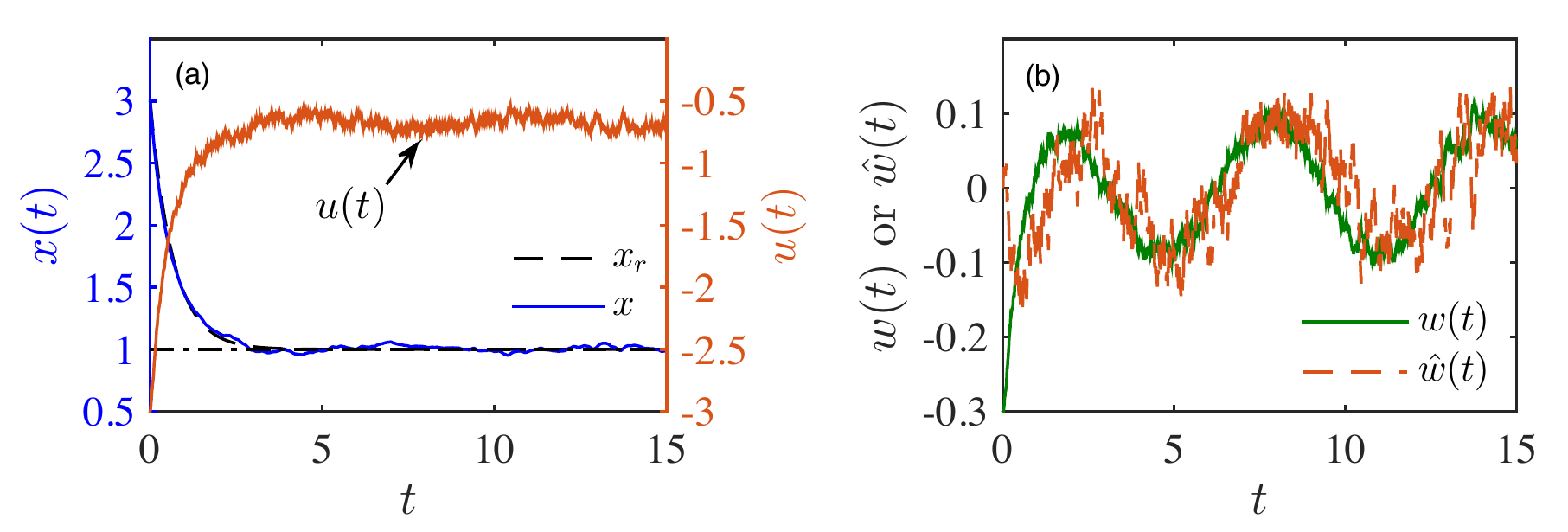} 
\par\end{centering}
\noindent \centering{}\caption{\label{fig: results of example 1}Trajectories of the state, control
and total disturbance.}
\end{figure}
\par\end{center}

\subsection{Control of an inverted pendulum}

Consider a normalized model of the pendulum when the control input
is the acceleration of the pivot \citep{Astrom2008}:
\begin{equation}
\dot{x}_{1}=x_{2},\,\,\dot{x}_{2}=\sin x_{1}-u\cos x_{1}+w_{0},\label{eq: pendulum_system}
\end{equation}
where $x_{1}$ is the angular position of the pendulum with the origin
at the upright position, $x_{2}$ is the angular velocity of the pendulum, and $w_{0}$ is an unknown disturbance which is equal to 0 if $t\le10$
and to $\sin t$ if $t>10$. The goal is to design a controller based
on the measurable states $x_{1}$ and $x_{2}$ which stabilizes the
pendulum at the upright position.

Specify the reference closed-loop model as: $\dot{x}_{r,\,1}=x_{r,\,2}$
and $\dot{x}_{r,\,2}=-k_{1}x_{r,\,1}-k_{2}x_{r,\,2}$, where $k_{1}$
and $k_{2}$ are positive scalars, and the desired tracking error
dynamics as: $\dot{e}_{1}=e_{2}$ and $\dot{e}_{2}=-k_{1}e_{1}-k_{2}e_{2}$,
where $e_{1}:=x_{r,\,1}-x_{1}$ and $e_{2}:=x_{r,\,2}-x_{2}$. Depending
on the system model used, different controllers can be designed by
the proposed approach. 

\textit{Case A:} The design bases on the ideal model \eqref{eq: pendulum_system}
and ignores $w_{0}$, leading to the following control
\begin{equation}
u=(k_{1}x_{1}+k_{2}x_{2}+\sin x_{1}) / \cos x_{1}.\label{eq: controller A}
\end{equation}
It equals the control obtained by an input-output
linearization technique \citep{Srinivasan2009}. The singularity of
the control at $x_{1}=\frac{2k+1}{2}\pi$ for $k=0,\,1,\,2,\,...$,
can be resolved by bounding the input and meanwhile switching the
reference value of $x_{1}$ properly \citep{Srinivasan2009}. This
controller is treated as a reference controller without explicit compensation
for the unknown disturbance.

\textit{Case B:} The dynamic model of $x_{2}$ is replaced by a fictitious
model: $\dot{x}_{2}=-\alpha u+w$, where $w$ lumps any model mismatch (i.e., $w=\sin x_{1}+u(\alpha-\cos x_{1})+w_{0}$) and $\alpha$ is a given scalar which controls the mismatch.
If a Type-I estimator is applied, it leads to an estimated control:
\begin{equation}
\hat{u}=\mathcal{L}^{-1}\left(\dfrac{k_{1}X_{1}+(k_{2}+sF_{x})X_{2}}{\alpha(1-F_{u})}\right),\label{eq: controller B-I}
\end{equation}
where $F_{x}$ and $F_{u}$ are filters for estimating $\dot{x}_{2}$
and $u$, respectively. Note that the disturbance $w$ has been estimated
and compensated in an implicit manner. If a Type-II estimator is used
instead, then the estimate $\hat{w}$ is obtained from an ESO defined
in \eqref{eq: ESO} and the observer gain matrix $\bar{\mathbf{L}}\in\mathbb{R}^{3\times2}$
is designed such that $\mathbf{\bar{A}-\bar{L}\bar{C}}$ is Hurwitz.
The control then takes the form of
\begin{equation}
\hat{u}=(k_{1}x_{1}+k_{2}x_{2}+\hat{w})/\alpha.\label{eq: controller B-II}
\end{equation}

The three controllers in \eqref{eq: controller A}, \eqref{eq: controller B-I}
and \eqref{eq: controller B-II} are named as controllers A, B.I and
B.II, respectively. The design parameters of the controllers are specified
as: $k_{1}=k_{2}=2$, $\left|u_{\text{max}}\right|=5$ (bounded control),
and $F_{x}=F_{u}=1/(0.05s+1)$. The filters $F_{x}$ and $F_{u}$
are such that the derivative of state $x_{2}$ can well be estimated,
and the observer gain matrix $\bar{\mathbf{L}}$ is a function of
$\alpha$ such that the three poles of the ESO are placed at -20,
-20 and -40, which are ten or more times faster than the actual state
dynamics.

\begin{figure}
\noindent \begin{centering}
\includegraphics[scale=0.54]{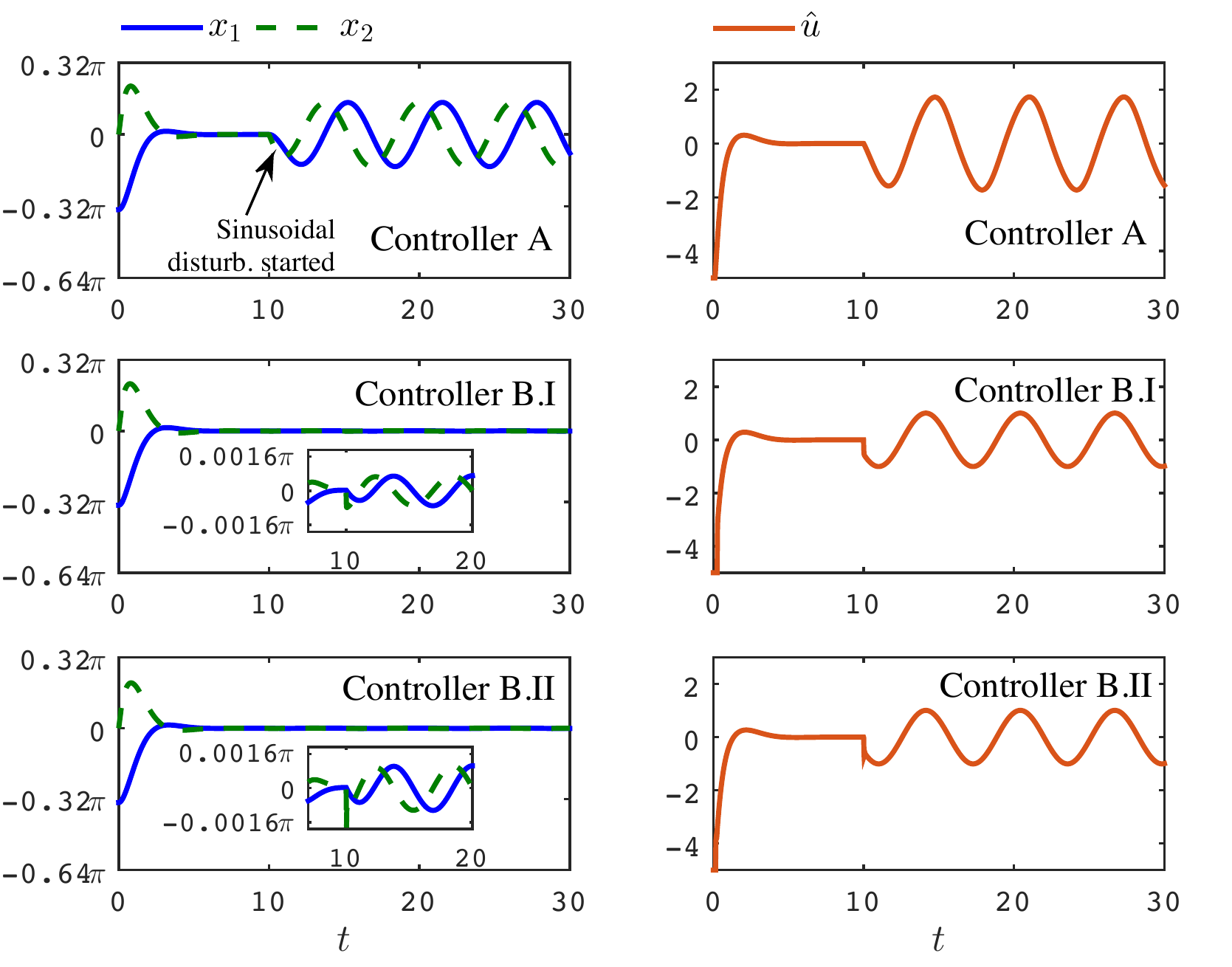}
\par\end{centering}
\noindent \centering{}\caption{\label{fig: states_control_pi_3}Performances of the three controllers.}\vspace{-6pt}
\end{figure}

With $(x_{1}(0),\,x_{2}(0))=(-\pi/3,\,0)$ and noise-free
measurements, the simulation results for $\alpha=0.1$ are shown in
Fig. \ref{fig: states_control_pi_3}. In the absence of the sinusoid disturbance, all three controllers were able to stabilize the pendulum
with comparable performances. When the sinusoid disturbance appeared,
however, controller A led to large tracking errors, in sharp contrast to controllers B.I and B.II.
Further simulations show that the small tracking errors of controllers B.I and B.II were maintained if the measurements were corrupted with additive zero-mean
Gaussian noises, while
the controls were experiencing frequent variations. The results are not shown for brevity. 

By varying the parameter $\alpha$, we simulated the control system
with different degrees of model mismatches. The performances of controllers
B.I and B.II are shown in Fig. \ref{fig: IAE - alpha}(a)-(b). The
integral absolute error (IAE) of state $x_{2}$ with respect to the
origin and the integral variation (IV) of control $\hat{u}$ are used as
the performance indices. Smaller IAE and IV indicate a better control
performance. As observed, the IAE of $x_{2}$ increases as $\alpha$
is enlarged from 0.1 to 1.0, and so does the IV of $\hat{u}$ in most of
the range, both of which indicate a degrading performance. This is
somehow counterintuitive since a better performance were expected
when the model tends to be more accurate (note that, $\alpha \hat{u}\rightarrow \hat{u}\cos x_{1}$ as $x_{1}\rightarrow0$ and $\alpha\rightarrow1$).
The underlying fact is that the affine control component, $\hat{u}(\alpha-\cos x_{1})$,
of the model mismatch $w$ acts to counteract the remaining mismatch,
$\sin x_{1}+w_{0}$. As $\alpha$ approaches 1, the factor ($\alpha-\cos x_{1}$)
approaches zero and hence the affine control component tends to be
nullified. Consequently, this weakens the counteraction action and
results in a larger total disturbance and a notable estimation error. This is reflected in the results shown in Fig. \ref{fig: IAE - alpha}(c)-(d), in which the disturbance component $(\sin x_{1}+w_{0})$ is almost unchanged as $\alpha$ is changed from 0.1 to 1.

\begin{figure}
\noindent \begin{centering}
\includegraphics[scale=0.54]{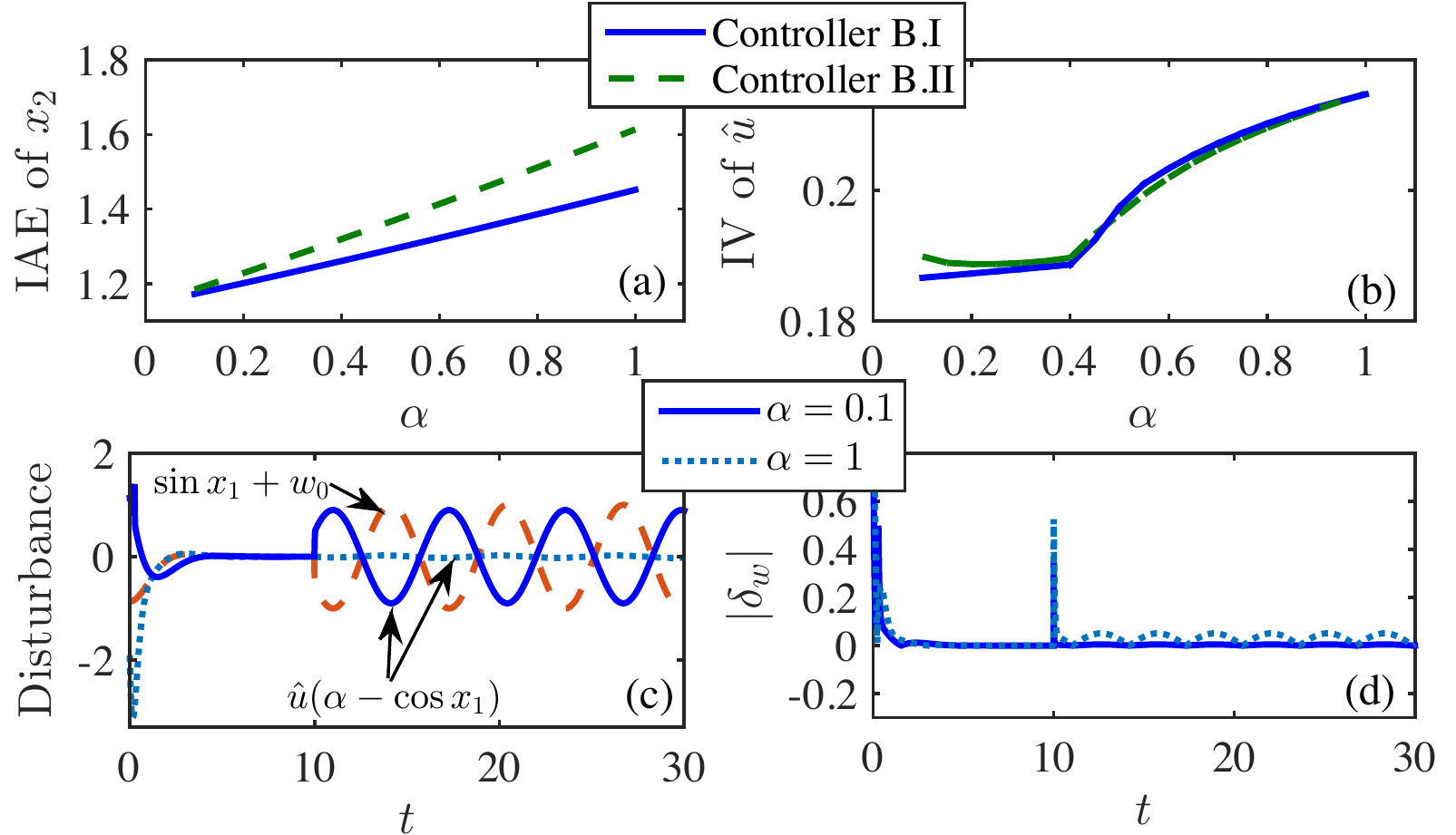}
\par\end{centering}
\noindent \centering{}\caption{\label{fig: IAE - alpha} Performances of controllers B.I and B.II
with different values of $\alpha$. (a)-(b): the performances of state
tracking and control input vs. $\alpha$; (c)-(d): the evolutions
of disturbance components and absolute disturbance estimation errors when
controller B.II was implemented. Meanwhile, the IAE of $x_{2}$ and
the IV of $u$ under controller A were obtained as 6.71 and 0.28,
respectively.}\vspace{-6pt}
\end{figure}

\section{Conclusions \label{sec:Conclusions}}

This work presented an output feedback approach for controlling a
general MIMO system to track a reference state trajectory. The approach uses a composite
observer to estimate the system states and disturbances simultaneously,
and then uses the estimates to derive the control. As the disturbances
lump all unmodeled dynamics and are compensated online, the design
admits a crude system model to be used for the control purpose. The
closed-loop stability was established under some standard conditions. 

It is desirable to analyze the closed-loop performance and extend
the design to broader classes of systems such as those with time delays.
It is also of interest to thoroughly investigate the interplay between
state control and disturbance compensation, and then optimize the
joint design. Future research may also consider embedding explicit
disturbance compensation into existing control methods, such as robust
control, optimal control, predictive control, etc., for obtaining
flexible and enhanced controls.

\section*{Acknowledgment}

This work was supported in part by the project DPI2016-78338-R as funded the Ministerio de Econom\'{i}a y Competitividad, Spain, and by the Projects of Major International (Regional) Joint Research Program (Grant no. 61720106011) as funded by NSFC, Singapore.

\appendix
\begin{figure*}
\section{Derivation of the closed-loop dynamics \label{sec:Deriving-the-closed-loop}}

Define $\tilde{\mathbf{B}}=\mathbf{I}_{n}-\mathbf{B}\mathbf{B}^{\dagger}$.
Given $\mathbf{h}(t,\,\mathbf{e})=\mathbf{Ke}$, by \eqref{eq: fictitious-LTI-model-with-extended-state}-\eqref{eq: final-tracking-error-dynamics}
the estimation error ($\mathbf{\delta}_{\bar{\mathbf{x}}}$) and the
tracking error ($\mathbf{e}$) evolve as follows:

\noindent 
\begin{align*}
\dot{\mathbf{\mathbf{\delta}}}_{\bar{\mathbf{x}}} & =\tilde{\mathbf{A}}\mathbf{\mathbf{\delta}}_{\bar{\mathbf{x}}}-\mathbf{L}\Pi\mathbf{v}+\left[\begin{array}{c}
\mathbf{0}_{n}\\
\dot{\mathbf{w}}
\end{array}\right]=\tilde{\mathbf{A}}\mathbf{\mathbf{\delta}}_{\bar{\mathbf{x}}}-\mathbf{L}\Pi\mathbf{v}+\left[\begin{array}{c}
\mathbf{0}_{n}\\
\frac{\partial\mathbf{w}}{\partial t}+\frac{\partial\mathbf{w}}{\partial\mathbf{x}}\dot{\mathbf{x}}+\frac{\partial\mathbf{w}}{\partial\hat{\mathbf{u}}}\dot{\hat{\mathbf{u}}}+\frac{\partial\mathbf{w}}{\partial\mathbf{w}_{0}}\dot{\mathbf{w}}_{0}
\end{array}\right]\\
 & =\tilde{\mathbf{A}}\mathbf{\mathbf{\delta}}_{\bar{\mathbf{x}}}-\mathbf{L}\Pi\mathbf{v}+\left[\begin{array}{c}
\mathbf{0}_{n}\\
\left(\begin{array}{c}
\frac{\partial\mathbf{w}}{\partial t}+\frac{\partial\mathbf{w}}{\partial\mathbf{w}_{0}}\dot{\mathbf{w}}_{0}+\frac{\partial\mathbf{w}}{\partial\mathbf{x}}(\mathbf{A}\mathbf{x}+\mathbf{B}\hat{\mathbf{u}}+\Gamma\mathbf{w})+\frac{\partial\mathbf{w}}{\partial\hat{\mathbf{u}}}\mathbf{B}^{\dagger}\left(\dot{\mathbf{x}}_{r}-\Gamma\hat{\mathbf{w}}-\mathbf{A}\hat{\mathbf{x}}-\mathbf{K}\hat{\mathbf{e}}\right)\end{array}\right)
\end{array}\right]\\
 & =\tilde{\mathbf{A}}\mathbf{\mathbf{\delta}}_{\bar{\mathbf{x}}}-\mathbf{L}\Pi\mathbf{v}+\left[\begin{array}{c}
\mathbf{0}_{n}\\
\left(\begin{array}{c}
\frac{\partial\mathbf{w}}{\partial t}+\frac{\partial\mathbf{w}}{\partial\mathbf{w}_{0}}\dot{\mathbf{w}}_{0}+\frac{\partial\mathbf{w}}{\partial\mathbf{x}}(\mathbf{A}\mathbf{x}+\mathbf{B}\hat{\mathbf{u}}+\Gamma\mathbf{w})\\
+\frac{\partial\mathbf{w}}{\partial\hat{\mathbf{u}}}\mathbf{B}^{\dagger}[\begin{array}{cc}
\mathbf{A}-\mathbf{K} & \Gamma\end{array}]\mathbf{\mathbf{\delta}}_{\bar{\mathbf{x}}}+\frac{\partial\mathbf{w}}{\partial\hat{\mathbf{u}}}\mathbf{B}^{\dagger}\left(\dot{\mathbf{x}}_{r}-\Gamma\mathbf{w}-\mathbf{A}\mathbf{x}-\mathbf{Ke}\right)
\end{array}\right)
\end{array}\right]\\
 & =\tilde{\mathbf{A}}\mathbf{\mathbf{\delta}}_{\bar{\mathbf{x}}}-\mathbf{L}\Pi\mathbf{v}+\left[\begin{array}{c}
\mathbf{0}_{n}\\
\left(\begin{array}{c}
\frac{\partial\mathbf{w}}{\partial\hat{\mathbf{u}}}\mathbf{B}^{\dagger}[\begin{array}{cc}
\mathbf{A}-\mathbf{K} & \Gamma\end{array}]\mathbf{\mathbf{\delta}}_{\bar{\mathbf{x}}}+\left(\frac{\partial\mathbf{w}}{\partial\hat{\mathbf{u}}}\mathbf{B}^{\dagger}(\mathbf{A}-\mathbf{K})-\frac{\partial\mathbf{w}}{\partial\mathbf{x}}\mathbf{A}\right)\mathbf{e}\\
+\frac{\partial\mathbf{w}}{\partial\hat{\mathbf{u}}}\mathbf{B}^{\dagger}\dot{\mathbf{x}}_{r}
+\left(\frac{\partial\mathbf{w}}{\partial\mathbf{x}}-\frac{\partial\mathbf{w}}{\partial\hat{\mathbf{u}}}\mathbf{B}^{\dagger}\right)\mathbf{A}\mathbf{x}_{r}+\frac{\partial\mathbf{w}}{\partial t}+\frac{\partial\mathbf{w}}{\partial\mathbf{w}_{0}}\dot{\mathbf{w}}_{0}+\frac{\partial\mathbf{w}}{\partial\mathbf{x}}\mathbf{B}\hat{\mathbf{u}}+\left(\frac{\partial\mathbf{w}}{\partial\mathbf{x}}-\frac{\partial\mathbf{w}}{\partial\hat{\mathbf{u}}}\mathbf{B}^{\dagger}\right)\Gamma\mathbf{w}
\end{array}\right)
\end{array}\right],\\
\dot{\mathbf{e}} & =\mathbf{K}\left(\mathbf{e}+[\mathbf{I}_{n}\,\,\mathbf{O}_{k}]\mathbf{\mathbf{\delta}}_{\bar{\mathbf{x}}}\right)-[\mathbf{A}\,\,\mathbf{\Gamma}]\mathbf{\mathbf{\delta}}_{\bar{\mathbf{x}}}+\tilde{\mathbf{B}}\left(\mathbf{f}_{r}-\Gamma\hat{\mathbf{w}}-\mathbf{A}\hat{\mathbf{x}}-\mathbf{K}\left(\mathbf{e}+[\mathbf{I}_{n}\,\,\mathbf{O}_{k}]\mathbf{\mathbf{\delta}}_{\bar{\mathbf{x}}}\right)\right)\\
 & =-[\mathbf{A}-\mathbf{B}\mathbf{B}^{\dagger}\mathbf{K}\,\,\Gamma]\mathbf{\mathbf{\delta}}_{\bar{\mathbf{x}}}+\mathbf{B}\mathbf{B}^{\dagger}\mathbf{K}\mathbf{e}+\tilde{\mathbf{B}}\left(\mathbf{f}_{r}-\Gamma\hat{\mathbf{w}}-\mathbf{A}(\mathbf{x}_{r}-\mathbf{e}-[\mathbf{I}_{n}\,\,\mathbf{O}_{k}]\mathbf{\mathbf{\delta}}_{\bar{\mathbf{x}}})\right)\\
 & =-[\mathbf{B}\mathbf{B}^{\dagger}(\mathbf{A}-\mathbf{K})\,\,\Gamma]\mathbf{\mathbf{\delta}}_{\bar{\mathbf{x}}}+\left(\mathbf{A}+\mathbf{B}\mathbf{B}^{\dagger}(\mathbf{K}-\mathbf{A})\right)\mathbf{e}+\tilde{\mathbf{B}}\left(\mathbf{f}_{r}-\mathbf{A}\mathbf{x}_{r}\right)-\tilde{\mathbf{B}}\left(\Gamma\mathbf{w}-[\mathbf{O}_{n}\,\,\Gamma]\mathbf{\delta}_{\bar{\mathbf{x}}}\right)\\
 & =-\mathbf{B}\mathbf{B}^{\dagger}[\mathbf{A}-\mathbf{K}\,\,\Gamma]\mathbf{\mathbf{\delta}}_{\bar{\mathbf{x}}}+\left(\mathbf{A}+\mathbf{B}\mathbf{B}^{\dagger}(\mathbf{K}-\mathbf{A})\right)\mathbf{e}+\tilde{\mathbf{B}}\left(\dot{\mathbf{x}}_{r}-\mathbf{Ax}_{r}-\Gamma\mathbf{w}\right).
\end{align*}

\noindent The two equations can be written concisely as (\ref{eq: estimation and tracking error dynamics}).

\section{Deduction of the $\dot{\mathbf{V}}(t)$}

\begin{align}
\dot{V}= & \bar{\mathbf{e}}^{\top}\left((\mathbf{H}+\Delta_{t})^{\top}\mathbf{N}+\mathbf{N}(\mathbf{H}+\Delta_{t})\right)\bar{\mathbf{e}}+2\bar{\mathbf{e}}^{\top}\mathbf{N}\delta_{t}=\bar{\mathbf{e}}^{\top}\left(-2\mathbf{M}+\Delta{}_{t}^{\top}\mathbf{N}+\mathbf{N}\Delta_{t}\right)\bar{\mathbf{e}}+2\bar{\mathbf{e}}^{\top}\mathbf{N}\delta_{t}\nonumber \\
\le & -\bar{\mathbf{e}}^{\top}(2\mathbf{M}-\Delta{}_{t}^{\top}\mathbf{N}-\mathbf{N}\Delta_{t})\bar{\mathbf{e}}+2\sigma_{\max}(\mathbf{N})\left\Vert \bar{\mathbf{e}}\right\Vert \left\Vert \delta_{t}\right\Vert \nonumber \\
\le & -\bar{\mathbf{e}}^{\top}(2\mathbf{M}-\Delta{}_{t}^{\top}\mathbf{N}-\mathbf{N}\Delta_{t})\bar{\mathbf{e}}\nonumber \\
 & +2\sigma_{\max}(\mathbf{N})\left\Vert \bar{\mathbf{e}}\right\Vert \cdot\left(\begin{array}{c}
\left\Vert \mathbf{L}\Pi\right\Vert \left\Vert \mathbf{v}\right\Vert +\left\Vert \frac{\partial\mathbf{w}}{\partial\hat{\mathbf{u}}}\right\Vert \left\Vert \mathbf{B}^{\dagger}\right\Vert \left\Vert \dot{\mathbf{x}}_{r}\right\Vert +\left(\left\Vert \frac{\partial\mathbf{w}}{\partial\mathbf{x}}\right\Vert \left\Vert \mathbf{A}\right\Vert +\left\Vert \frac{\partial\mathbf{w}}{\partial\hat{\mathbf{u}}}\right\Vert \left\Vert \mathbf{B}^{\dagger}\mathbf{A}\right\Vert \right)\cdot\left\Vert \mathbf{x}_{r}\right\Vert \\
 +\left\Vert \frac{\partial\mathbf{w}}{\partial t}\right\Vert
+\left\Vert \frac{\partial\mathbf{w}}{\partial\mathbf{w}_{0}}\right\Vert \left\Vert \dot{\mathbf{w}}_{0}\right\Vert +\left\Vert \frac{\partial\mathbf{w}}{\partial\mathbf{x}}\right\Vert \left\Vert \mathbf{B}\right\Vert \left\Vert \hat{\mathbf{u}}\right\Vert +\left(\left\Vert \frac{\partial\mathbf{w}}{\partial\mathbf{x}}\right\Vert \left\Vert \Gamma\right\Vert +\left\Vert \frac{\partial\mathbf{w}}{\partial\hat{\mathbf{u}}}\right\Vert \left\Vert \mathbf{B}^{\dagger}\Gamma\right\Vert \right)\cdot\left\Vert \mathbf{w}\right\Vert \\
+\left\Vert \tilde{\mathbf{B}}\right\Vert \left\Vert \dot{\mathbf{x}}_{r}\right\Vert +\left\Vert \tilde{\mathbf{B}}\mathbf{A}\right\Vert \left\Vert \mathbf{x}_{r}\right\Vert +\left\Vert \tilde{\mathbf{B}}\Gamma\right\Vert \left\Vert \mathbf{w}\right\Vert 
\end{array}\right)\nonumber \\
\le & -\bar{\mathbf{e}}^{\top}(2\mathbf{M}-\Delta{}_{t}^{\top}\mathbf{N}-\mathbf{N}\Delta_{t})\bar{\mathbf{e}}\nonumber \\
 & +2\sigma_{\max}(\mathbf{N})\left\Vert \bar{\mathbf{e}}\right\Vert \cdot\left(\begin{array}{c}
\left\Vert \mathbf{L}\Pi\right\Vert \cdot\left(l_{\mathbf{v}}+l_{\mathbf{v}}^{\mathbf{x}}\left\Vert \mathbf{x}\right\Vert +l_{\mathbf{v}}^{\mathbf{u}}\left\Vert \hat{\mathbf{u}}\right\Vert +l_{\mathbf{v}}^{\mathbf{v}0}\left\Vert \mathbf{v}_{0}\right\Vert \right)+\left(l_{\partial\mathbf{w}}^{\mathbf{u}}\left\Vert \mathbf{B}^{\dagger}\right\Vert +\left\Vert \tilde{\mathbf{B}}\right\Vert \right)\cdot\left\Vert \dot{\mathbf{x}}_{r}\right\Vert \\
+\left(l_{\partial\mathbf{w}}^{\mathbf{x}}\left\Vert \mathbf{A}\right\Vert +l_{\partial\mathbf{w}}^{\mathbf{u}}\left\Vert \mathbf{B}^{\dagger}\mathbf{A}\right\Vert +\left\Vert \tilde{\mathbf{B}}\mathbf{A}\right\Vert \right)\cdot\left\Vert \mathbf{x}_{r}\right\Vert +l_{\partial\mathbf{w}}^{\mathbf{w}_{0}}\left\Vert \dot{\mathbf{w}}_{0}\right\Vert +l_{\partial\mathbf{w}}^{\mathbf{x}}\left\Vert \mathbf{B}\right\Vert \left\Vert \hat{\mathbf{u}}\right\Vert +l_{\partial\mathbf{w}}\\
+\left(l_{\partial\mathbf{w}}^{\mathbf{x}}\left\Vert \Gamma\right\Vert +l_{\partial\mathbf{w}}^{\mathbf{u}}\left\Vert \mathbf{B}^{\dagger}\Gamma\right\Vert +\left\Vert \tilde{\mathbf{B}}\Gamma\right\Vert \right)\cdot\left(l_{\mathbf{w}}+l_{\mathbf{w}}^{\mathbf{x}}\left\Vert \mathbf{x}\right\Vert +l_{\mathbf{w}}^{\mathbf{u}}\left\Vert \hat{\mathbf{u}}\right\Vert +l_{\mathbf{w}}^{\mathbf{w}0}\left\Vert \mathbf{w}_{0}\right\Vert \right)
\end{array}\right)\label{eq: proof}\\
\le & -\bar{\mathbf{e}}^{\top}(2\mathbf{M}-\Delta{}_{t}^{\top}\mathbf{N}-\mathbf{N}\Delta_{t})\bar{\mathbf{e}}\nonumber \\
 & +2\sigma_{\max}(\mathbf{N})\left\Vert \bar{\mathbf{e}}\right\Vert \cdot\left(\begin{array}{c}
\left\Vert \mathbf{L}\Pi\right\Vert \cdot\left(l_{\mathbf{v}}+l_{\mathbf{v}}^{\mathbf{x}}\left\Vert \mathbf{x}_{r}\right\Vert +l_{\mathbf{v}}^{\mathbf{x}}\left\Vert \mathbf{e}\right\Vert +l_{\mathbf{v}}^{\mathbf{u}}\left\Vert \hat{\mathbf{u}}\right\Vert +l_{\mathbf{v}}^{\mathbf{v}0}\left\Vert \mathbf{v}_{0}\right\Vert \right)+\left(l_{\partial\mathbf{w}}^{\mathbf{u}}\left\Vert \mathbf{B}^{\dagger}\right\Vert +\left\Vert \tilde{\mathbf{B}}\right\Vert \right)\cdot\left\Vert \dot{\mathbf{x}}_{r}\right\Vert \\
+\left(l_{\partial\mathbf{w}}^{\mathbf{x}}\left\Vert \mathbf{A}\right\Vert +l_{\partial\mathbf{w}}^{\mathbf{u}}\left\Vert \mathbf{B}^{\dagger}\mathbf{A}\right\Vert +\left\Vert \tilde{\mathbf{B}}\mathbf{A}\right\Vert \right)\cdot\left\Vert \mathbf{x}_{r}\right\Vert +l_{\partial\mathbf{w}}^{\mathbf{w}_{0}}\left\Vert \dot{\mathbf{w}}_{0}\right\Vert +l_{\partial\mathbf{w}}^{\mathbf{x}}\left\Vert \mathbf{B}\right\Vert \left\Vert \hat{\mathbf{u}}\right\Vert +l_{\partial\mathbf{w}}\\
+\left(l_{\partial\mathbf{w}}^{\mathbf{x}}\left\Vert \Gamma\right\Vert +l_{\partial\mathbf{w}}^{\mathbf{u}}\left\Vert \mathbf{B}^{\dagger}\Gamma\right\Vert +\left\Vert \tilde{\mathbf{B}}\Gamma\right\Vert \right)\cdot\left(l_{\mathbf{w}}+l_{\mathbf{w}}^{\mathbf{x}}\left\Vert \mathbf{x}_{r}\right\Vert +l_{\mathbf{w}}^{\mathbf{x}}\left\Vert \mathbf{e}\right\Vert +l_{\mathbf{w}}^{\mathbf{u}}\left\Vert \hat{\mathbf{u}}\right\Vert +l_{\mathbf{w}}^{\mathbf{w}0}\left\Vert \mathbf{w}_{0}\right\Vert \right)
\end{array}\right)\nonumber \\
\le & -\bar{\mathbf{e}}^{\top}\left(2\mathbf{M}-\Delta{}_{t}^{\top}\mathbf{N}-\mathbf{N}\Delta_{t}-2\beta_{1}\sigma_{\max}(\mathbf{N})\mathbf{I}_{2n+k}\right)\bar{\mathbf{e}}\nonumber \\
 & +2\sigma_{\max}(\mathbf{N})\cdot\left(\begin{array}{c}
c_{\mathbf{x}r}\cdot\left(\beta_{1}+l_{\partial\mathbf{w}}^{\mathbf{x}}\left\Vert \mathbf{A}\right\Vert +l_{\partial\mathbf{w}}^{\mathbf{u}}\left\Vert \mathbf{B}^{\dagger}\mathbf{A}\right\Vert +\left\Vert \tilde{\mathbf{B}}\mathbf{A}\right\Vert \right)+c_{\dot{\mathbf{x}}r}\cdot\left(l_{\partial\mathbf{w}}^{\mathbf{u}}\left\Vert \mathbf{B}^{\dagger}\right\Vert +\left\Vert \tilde{\mathbf{B}}\right\Vert \right)\\
\begin{array}{c}
+c_{\mathbf{u}}\beta_{2}+c_{\mathbf{w}0}l_{\mathbf{w}}^{\mathbf{w}0}\beta_{0}+c_{\dot{\mathbf{w}}0}l_{\partial\mathbf{w}}^{\mathbf{w}_{0}}+(l_{\mathbf{v}}+c_{\mathbf{v}0}l_{\mathbf{v}}^{\mathbf{v}0})\left\Vert \mathbf{L}\Pi\right\Vert +l_{\mathbf{w}}\beta_{0}+l_{\partial\mathbf{w}}\end{array}
\end{array}\right)\cdot\left\Vert \bar{\mathbf{e}}\right\Vert ,\nonumber 
\end{align}

\noindent where the three scalars $\{\beta_{i}\}_{i=0,\thinspace1,\,2}$
are given as
\begin{equation}
\begin{aligned}\beta_{0} & :=l_{\partial\mathbf{w}}^{\mathbf{x}}\left\Vert \Gamma\right\Vert +l_{\partial\mathbf{w}}^{\mathbf{u}}\left\Vert \mathbf{B}^{\dagger}\Gamma\right\Vert +\left\Vert \tilde{\mathbf{B}}\Gamma\right\Vert ,\thinspace\thinspace\beta_{1}:=l_{\mathbf{v}}^{\mathbf{x}}\left\Vert \mathbf{L}\Pi\right\Vert +l_{\mathbf{w}}^{\mathbf{x}}\beta_{0},\thinspace\thinspace\beta_{2}:=l_{\mathbf{v}}^{\mathbf{u}}\left\Vert \mathbf{L}\Pi\right\Vert +l_{\partial\mathbf{w}}^{\mathbf{x}}\left\Vert \mathbf{B}\right\Vert +l_{\mathbf{w}}^{\mathbf{u}}\beta_{0}.\end{aligned}
\label{eq: beta0-3}
\end{equation}
\end{figure*}

\section*{References}

\bibliographystyle{model5-names}
\bibliography{HuCamachoXie_OutputFeedback_Automatica2016}

\begin{thebibliography}{50}
\expandafter\ifx\csname natexlab\endcsname\relax\def\natexlab#1{#1}\fi
\providecommand{\bibinfo}[2]{#2}
\ifx\xfnm\relax \def\xfnm[#1]{\unskip,\space#1}\fi
\bibitem[{{\AA}str{\"o}m \& Wittenmark(2013)}]{aastrom2013adaptive}
\bibinfo{author}{{\AA}str{\"o}m, K.~J.}, \& \bibinfo{author}{Wittenmark, B.}
  (\bibinfo{year}{2013}).
\newblock {\it \bibinfo{title}{Adaptive control}\/}.
\newblock \bibinfo{publisher}{Courier Corporation}.
\bibitem[{Chen(1999)}]{Chen1999}
\bibinfo{author}{Chen, C.~T.} (\bibinfo{year}{1999}).
\newblock {\it \bibinfo{title}{{L}inear {S}ystem {T}heory and {D}esign}\/}.
\newblock \bibinfo{address}{Oxford}: \bibinfo{publisher}{Oxford University
  Press}.
\bibitem[{Chen et~al.(2016)Chen, Yang, Guo \& Li}]{chen2016disturbance}
\bibinfo{author}{Chen, W.-H.}, \bibinfo{author}{Yang, J.},
  \bibinfo{author}{Guo, L.}, \& \bibinfo{author}{Li, S.}
  (\bibinfo{year}{2016}).
\newblock \bibinfo{title}{Disturbance-observer-based control and related
  methods--an overview}.
\newblock {\it \bibinfo{journal}{IEEE Trans. Ind. Electron.}\/},  {\it
  \bibinfo{volume}{63}\/}, \bibinfo{pages}{1083--1095}.
\bibitem[{Fliess(1990)}]{Fliess1990}
\bibinfo{author}{Fliess, M.} (\bibinfo{year}{1990}).
\newblock \bibinfo{title}{Generalized controller canonical form for linear and
  nonlinear dynamics}.
\newblock {\it \bibinfo{journal}{IEEE Trans. on Automatic Control}\/},  {\it
  \bibinfo{volume}{35}\/}, \bibinfo{pages}{994--1001}.
\bibitem[{Fliess \& Join(2009)}]{Fliess2009}
\bibinfo{author}{Fliess, M.}, \& \bibinfo{author}{Join, C.}
  (\bibinfo{year}{2009}).
\newblock \bibinfo{title}{Model-free control and intelligent pid controllers:
  towards a possible trivialization of nonlinear control?}
\newblock In {\it \bibinfo{booktitle}{15th IFAC Symp. System Identif.}\/}.
\newblock \bibinfo{address}{Saint-Malo}.
\bibitem[{Fliess \& Join(2013)}]{Fliess2013}
\bibinfo{author}{Fliess, M.}, \& \bibinfo{author}{Join, C.}
  (\bibinfo{year}{2013}).
\newblock \bibinfo{title}{Model-free control}.
\newblock {\it \bibinfo{journal}{Inter. J. of Contr.}\/},  {\it
  \bibinfo{volume}{86}\/}, \bibinfo{pages}{2228--2252}.
\bibitem[{Fliess \& Sira-Ram{\'i}rez(2003)}]{fliess2004algebraic}
\bibinfo{author}{Fliess, M.}, \& \bibinfo{author}{Sira-Ram{\'i}rez, H.}
  (\bibinfo{year}{2003}).
\newblock \bibinfo{title}{An algebraic framework for linear identification}.
\newblock {\it \bibinfo{journal}{ESAIM: Control, Optimization and Calculus of
  Variations}\/},  {\it \bibinfo{volume}{9}\/}, \bibinfo{pages}{151--168}.
\bibitem[{Fliess \& Sira-Ram{\'i}rez(2008)}]{fliess2008closed}
\bibinfo{author}{Fliess, M.}, \& \bibinfo{author}{Sira-Ram{\'i}rez, H.}
  (\bibinfo{year}{2008}).
\newblock \bibinfo{title}{Closed-loop parametric identification for
  continuous-time linear systems via new algebraic techniques}.
\newblock chapter \bibinfo{chapter}{13 in {I}dentification of {C}ontinuous-time
  {M}odels from {S}ampled {D}ata}. (pp. \bibinfo{pages}{363--391}).
\newblock \bibinfo{address}{London}: \bibinfo{publisher}{Springer Verlag}.
\bibitem[{Gao(2006)}]{Gao2006}
\bibinfo{author}{Gao, Z.} (\bibinfo{year}{2006}).
\newblock \bibinfo{title}{Active disturbance rejection control: a paradigm
  shift in feedback control system design}.
\newblock In {\it \bibinfo{booktitle}{American Control Conference}\/}.
\bibitem[{Gao(2014)}]{gao2014centrality}
\bibinfo{author}{Gao, Z.} (\bibinfo{year}{2014}).
\newblock \bibinfo{title}{On the centrality of disturbance rejection in
  automatic control}.
\newblock {\it \bibinfo{journal}{ISA transactions}\/},  {\it
  \bibinfo{volume}{53}\/}, \bibinfo{pages}{850--857}.
\bibitem[{Guo \& Zhao(2011)}]{guo2011convergence}
\bibinfo{author}{Guo, B.-Z.}, \& \bibinfo{author}{Zhao, Z.-L.}
  (\bibinfo{year}{2011}).
\newblock \bibinfo{title}{On the convergence of an extended state observer for
  nonlinear systems with uncertainty}.
\newblock {\it \bibinfo{journal}{Systems \& Control Letters}\/},  {\it
  \bibinfo{volume}{60}\/}, \bibinfo{pages}{420--430}.
\bibitem[{Guo \& Zhao(2013)}]{guo2013convergence}
\bibinfo{author}{Guo, B.-Z.}, \& \bibinfo{author}{Zhao, Z.-L.}
  (\bibinfo{year}{2013}).
\newblock \bibinfo{title}{On convergence of the nonlinear active disturbance
  rejection control for {MIMO} systems}.
\newblock {\it \bibinfo{journal}{SIAM Journal on Control and Optimization}\/},
  {\it \bibinfo{volume}{51}\/}, \bibinfo{pages}{1727--1757}.
\bibitem[{Guo \& Cao(2014)}]{guo2014anti}
\bibinfo{author}{Guo, L.}, \& \bibinfo{author}{Cao, S.} (\bibinfo{year}{2014}).
\newblock \bibinfo{title}{Anti-disturbance control theory for systems with
  multiple disturbances: A survey}.
\newblock {\it \bibinfo{journal}{ISA Trans.}\/},  {\it \bibinfo{volume}{53}\/},
  \bibinfo{pages}{846--849}.
\bibitem[{Guo \& Chen(2005)}]{guo2005disturbance}
\bibinfo{author}{Guo, L.}, \& \bibinfo{author}{Chen, W.-H.}
  (\bibinfo{year}{2005}).
\newblock \bibinfo{title}{Disturbance attenuation and rejection for systems
  with nonlinearity via {DOBC} approach}.
\newblock {\it \bibinfo{journal}{Intl. J. Robust Nonlinear Control}\/},  {\it
  \bibinfo{volume}{15}\/}, \bibinfo{pages}{109--125}.
\bibitem[{Han(1981)}]{Han1981}
\bibinfo{author}{Han, J.} (\bibinfo{year}{1981}).
\newblock \bibinfo{title}{The structure of linear control system and
  computation in feedback system. (presented at the national meeting on control
  theory and applications, xiamen, 1979.)}.
\newblock In {\it \bibinfo{booktitle}{Proceedings of the National Meeting on
  Control Theory and Applications, Science Press [in Chinese]}\/}.
\bibitem[{Han(1995)}]{Han1995}
\bibinfo{author}{Han, J.} (\bibinfo{year}{1995}).
\newblock \bibinfo{title}{A class of extended state observers for uncertain
  systems}.
\newblock {\it \bibinfo{journal}{Control and Decision}\/},  {\it
  \bibinfo{volume}{10}\/}, \bibinfo{pages}{85--88}.
\bibitem[{Han(1998)}]{Han1998}
\bibinfo{author}{Han, J.} (\bibinfo{year}{1998}).
\newblock \bibinfo{title}{Auto-disturbance rejection control and its
  applications}.
\newblock {\it \bibinfo{journal}{Control and Decision}\/},  {\it
  \bibinfo{volume}{13}\/}, \bibinfo{pages}{19--23}.
\bibitem[{Han(1999)}]{Han1999}
\bibinfo{author}{Han, J.} (\bibinfo{year}{1999}).
\newblock \bibinfo{title}{Nonlinear design methods for control systems}.
\newblock In {\it \bibinfo{booktitle}{Proc. of the 14th IFAC World Congress}\/}
  (pp. \bibinfo{pages}{521--526}).
\bibitem[{Hu et~al.(2014)Hu, Camacho \& Xie}]{hu2014feedforward}
\bibinfo{author}{Hu, W.}, \bibinfo{author}{Camacho, E.~F.}, \&
  \bibinfo{author}{Xie, L.} (\bibinfo{year}{2014}).
\newblock \bibinfo{title}{Feedforward and feedback control of dynamic systems}.
\newblock {\it \bibinfo{journal}{IFAC Proceedings Volumes}\/},  {\it
  \bibinfo{volume}{47}\/}, \bibinfo{pages}{7741--7748}.
\bibitem[{Hu \& Mao(2014)}]{Hu2014improved}
\bibinfo{author}{Hu, W.}, \& \bibinfo{author}{Mao, J.} (\bibinfo{year}{2014}).
\newblock \bibinfo{title}{Improved algebraic method for linear continuous-time
  model identification}.
\newblock {\it \bibinfo{journal}{Journal of Control and Decision}\/},  {\it
  \bibinfo{volume}{1}\/}, \bibinfo{pages}{180--189}.
\bibitem[{Huang \& Xue(2012)}]{Huang2012}
\bibinfo{author}{Huang, Y.}, \& \bibinfo{author}{Xue, W.}
  (\bibinfo{year}{2012}).
\newblock \bibinfo{title}{Active disturbance rejection control: methodology,
  applications and theoretical analysis}.
\newblock {\it \bibinfo{journal}{Journal of Systerms Science \& Mathematical
  Sciences}\/},  {\it \bibinfo{volume}{32}\/}, \bibinfo{pages}{1287--1307}.
\bibitem[{Johnson(1968)}]{johnson1968optimal}
\bibinfo{author}{Johnson, C.} (\bibinfo{year}{1968}).
\newblock \bibinfo{title}{Optimal control of the linear regulator with constant
  disturbances}.
\newblock {\it \bibinfo{journal}{IEEE Trans. on Auto. Cont.}\/},  {\it
  \bibinfo{volume}{13}\/}, \bibinfo{pages}{416--421}.
\bibitem[{Johnson(1970)}]{johnson1970further}
\bibinfo{author}{Johnson, C.} (\bibinfo{year}{1970}).
\newblock \bibinfo{title}{Further study of the linear regulator with
  disturbances--the case of vector disturbances satisfying a linear
  differential equation}.
\newblock {\it \bibinfo{journal}{IEEE Trans. on Auto. Cont.}\/},  {\it
  \bibinfo{volume}{15}\/}, \bibinfo{pages}{222--228}.
\bibitem[{Johnson(1975)}]{johnson1975observers}
\bibinfo{author}{Johnson, C.} (\bibinfo{year}{1975}).
\newblock \bibinfo{title}{On observers for systems with unknown and
  inaccessible inputs}.
\newblock {\it \bibinfo{journal}{Inter. J. of Contr.}\/},  {\it
  \bibinfo{volume}{21}\/}, \bibinfo{pages}{825--831}.
\bibitem[{Johnson(1976)}]{johnson1976theory}
\bibinfo{author}{Johnson, C.} (\bibinfo{year}{1976}).
\newblock \bibinfo{title}{Control and dynamic systems: Advances in theory and
  applications}.
\newblock chapter \bibinfo{chapter}{Theory of disturbance-accommodating
  controllers}. (pp. \bibinfo{pages}{387--489}).
\newblock \bibinfo{publisher}{New York: Academic Press, Inc.}
\bibitem[{Johnson(1986)}]{johnson1986disturbance}
\bibinfo{author}{Johnson, C.} (\bibinfo{year}{1986}).
\newblock \bibinfo{title}{Disturbance-accommodating control: an overview}.
\newblock In {\it \bibinfo{booktitle}{American Control Conference}\/} (pp.
  \bibinfo{pages}{526--536}).
\bibitem[{Kano \& Ogawa(2010)}]{Kano2010}
\bibinfo{author}{Kano, M.}, \& \bibinfo{author}{Ogawa, M.}
  (\bibinfo{year}{2010}).
\newblock \bibinfo{title}{{T}he state of the art in chemical process control in
  {J}apan: {G}ood practice and questionnaire survey}.
\newblock {\it \bibinfo{journal}{Journal of Process Control}\/},  {\it
  \bibinfo{volume}{20}\/}, \bibinfo{pages}{969--982}.
\bibitem[{Khalil(2002)}]{khalil2002nonlinear}
\bibinfo{author}{Khalil, H.~K.} (\bibinfo{year}{2002}).
\newblock {\it \bibinfo{title}{Nonlinear Systems}\/}.
\newblock (\bibinfo{edition}{3rd} ed.).
\newblock \bibinfo{address}{Upper Saddle River, NJ}:
  \bibinfo{publisher}{Prentice Hall}.
\bibitem[{Madonski \& Herman(2013)}]{Madonski2013}
\bibinfo{author}{Madonski, R.}, \& \bibinfo{author}{Herman, P.}
  (\bibinfo{year}{2013}).
\newblock \bibinfo{title}{On the usefulness of higher-order disturbance
  observers in real control scenarios based on perturbation estimation and
  mitigation}.
\newblock In {\it \bibinfo{booktitle}{9th Workshop on Robot Motion and
  Control}\/} (pp. \bibinfo{pages}{252--257}).
\bibitem[{Meditch \& Hostetter(1974)}]{meditch1974observers}
\bibinfo{author}{Meditch, J.}, \& \bibinfo{author}{Hostetter, G.}
  (\bibinfo{year}{1974}).
\newblock \bibinfo{title}{Observers for systems with unknown and inaccessible
  inputs}.
\newblock {\it \bibinfo{journal}{Inter. J. of Contr.}\/},  {\it
  \bibinfo{volume}{19}\/}, \bibinfo{pages}{473--480}.
\bibitem[{Miklosovic et~al.(2006)Miklosovic, Radke \& Gao}]{Miklosovic2006}
\bibinfo{author}{Miklosovic, R.}, \bibinfo{author}{Radke, A.}, \&
  \bibinfo{author}{Gao, Z.} (\bibinfo{year}{2006}).
\newblock \bibinfo{title}{Discrete implementation and generalization of the
  extended state observer}.
\newblock In {\it \bibinfo{booktitle}{American Control Conference}\/}.
\bibitem[{O'Dwyer(2009)}]{O'Dwyer2009}
\bibinfo{author}{O'Dwyer, A.} (\bibinfo{year}{2009}).
\newblock {\it \bibinfo{title}{{H}andbook of {PI} and {PID} {C}ontroller
  {T}uning {R}ules}\/}.
\newblock (\bibinfo{edition}{3rd} ed.).
\newblock \bibinfo{address}{London}: \bibinfo{publisher}{Imperial College
  Press}.
\bibitem[{Ohishi et~al.(1987)Ohishi, Nakao, Ohnishi \&
  Miyachi}]{ohishi1987microprocessor}
\bibinfo{author}{Ohishi, K.}, \bibinfo{author}{Nakao, M.},
  \bibinfo{author}{Ohnishi, K.}, \& \bibinfo{author}{Miyachi, K.}
  (\bibinfo{year}{1987}).
\newblock \bibinfo{title}{Microprocessor-controlled dc motor for
  load-insensitive position servo system}.
\newblock {\it \bibinfo{journal}{IEEE Trans. Ind. Electron.}\/},  (pp.
  \bibinfo{pages}{44--49}).
\bibitem[{Ohishi et~al.(1983)Ohishi, Ohnishi \& Miyachi}]{ohishi1983torque}
\bibinfo{author}{Ohishi, K.}, \bibinfo{author}{Ohnishi, K.}, \&
  \bibinfo{author}{Miyachi, K.} (\bibinfo{year}{1983}).
\newblock \bibinfo{title}{Torque-speed regulation of dc motor based on load
  torque estimation}.
\newblock In {\it \bibinfo{booktitle}{Proceedings of the IEEJ International
  Power Electronics Conference}\/} (pp. \bibinfo{pages}{1209--1216}).
\newblock volume~\bibinfo{volume}{2}.
\bibitem[{Patel \& Toda(1980)}]{patel1980quantitative}
\bibinfo{author}{Patel, R.}, \& \bibinfo{author}{Toda, M.}
  (\bibinfo{year}{1980}).
\newblock \bibinfo{title}{Quantitative measures of robustness for multivariable
  systems}.
\newblock In {\it \bibinfo{booktitle}{Joint Automatic Control Conference}\/}.
\newblock \bibinfo{address}{San Francisco, CA, USA}.
\bibitem[{\r{A}str{\"o}m et~al.(2008)\r{A}str{\"o}m, Aracil \&
  Gordillo}]{Astrom2008}
\bibinfo{author}{\r{A}str{\"o}m, K.~J.}, \bibinfo{author}{Aracil, J.}, \&
  \bibinfo{author}{Gordillo, F.} (\bibinfo{year}{2008}).
\newblock \bibinfo{title}{A family of smooth controllers for swinging up a
  pendulum}.
\newblock {\it \bibinfo{journal}{Automatica}\/},  {\it \bibinfo{volume}{44}\/},
  \bibinfo{pages}{1841--1848}.
\bibitem[{Sariyildiz \& Ohnishi(2014)}]{sariyildiz2014guide}
\bibinfo{author}{Sariyildiz, E.}, \& \bibinfo{author}{Ohnishi, K.}
  (\bibinfo{year}{2014}).
\newblock \bibinfo{title}{A guide to design disturbance observer}.
\newblock {\it \bibinfo{journal}{ASME J. Dyn. Syst., Meas., Control}\/},  {\it
  \bibinfo{volume}{136}\/}, \bibinfo{pages}{021011}.
\bibitem[{Schrijver \& Van~Dijk(2002)}]{schrijver2002disturbance}
\bibinfo{author}{Schrijver, E.}, \& \bibinfo{author}{Van~Dijk, J.}
  (\bibinfo{year}{2002}).
\newblock \bibinfo{title}{Disturbance observers for rigid mechanical systems:
  equivalence, stability, and design}.
\newblock {\it \bibinfo{journal}{ASME J. Dyn. Syst., Meas., Control}\/},  {\it
  \bibinfo{volume}{124}\/}, \bibinfo{pages}{539--548}.
\bibitem[{Shim \& Jo(2009)}]{shim2009almost}
\bibinfo{author}{Shim, H.}, \& \bibinfo{author}{Jo, N.~H.}
  (\bibinfo{year}{2009}).
\newblock \bibinfo{title}{An almost necessary and sufficient condition for
  robust stability of closed-loop systems with disturbance observer}.
\newblock {\it \bibinfo{journal}{Automatica}\/},  {\it \bibinfo{volume}{45}\/},
  \bibinfo{pages}{296--299}.
\bibitem[{Shim et~al.(2008)Shim, Jo \& Son}]{shim2008new}
\bibinfo{author}{Shim, H.}, \bibinfo{author}{Jo, N.~H.}, \&
  \bibinfo{author}{Son, Y.-I.} (\bibinfo{year}{2008}).
\newblock \bibinfo{title}{A new disturbance observer for non-minimum phase
  linear systems}.
\newblock In {\it \bibinfo{booktitle}{American Control Conference}\/} (pp.
  \bibinfo{pages}{3385--3389}).
\newblock \bibinfo{organization}{IEEE}.
\bibitem[{Srinivasan et~al.(2009)Srinivasan, Huguenin \&
  Bonvin}]{Srinivasan2009}
\bibinfo{author}{Srinivasan, B.}, \bibinfo{author}{Huguenin, P.}, \&
  \bibinfo{author}{Bonvin, D.} (\bibinfo{year}{2009}).
\newblock \bibinfo{title}{Global stabilization of an inverted pendulum -
  control strategy and experimental verification}.
\newblock {\it \bibinfo{journal}{Automatica}\/},  {\it \bibinfo{volume}{45}\/},
  \bibinfo{pages}{265--269}.
\bibitem[{Truxall(1955)}]{truxall1955automatic}
\bibinfo{author}{Truxall, J.~G.} (\bibinfo{year}{1955}).
\newblock {\it \bibinfo{title}{Automatic feedback control system synthesis}\/}.
\newblock \bibinfo{publisher}{MacGraw-Hill}.
\bibitem[{Yang \& Huang(2009)}]{Yang2009}
\bibinfo{author}{Yang, X.}, \& \bibinfo{author}{Huang, Y.}
  (\bibinfo{year}{2009}).
\newblock \bibinfo{title}{Capabilities of extended state observer for
  estimating uncertainties}.
\newblock In {\it \bibinfo{booktitle}{American Control Conference}\/} (pp.
  \bibinfo{pages}{3700--3705}).
\bibitem[{Yao \& Guo(2014)}]{yao2014disturbance}
\bibinfo{author}{Yao, X.}, \& \bibinfo{author}{Guo, L.} (\bibinfo{year}{2014}).
\newblock \bibinfo{title}{Disturbance attenuation and rejection for
  discrete-time markovian jump systems with lossy measurements}.
\newblock {\it \bibinfo{journal}{Info. Sciences}\/},  {\it
  \bibinfo{volume}{278}\/}, \bibinfo{pages}{673--684}.
\bibitem[{Youcef-Toumi \& Ito(1988)}]{Youcef-Toumi1988}
\bibinfo{author}{Youcef-Toumi, K.}, \& \bibinfo{author}{Ito, O.}
  (\bibinfo{year}{1988}).
\newblock \bibinfo{title}{A time delay controller for systems with unknown
  dynamics}.
\newblock In {\it \bibinfo{booktitle}{American Control Conference}\/} (pp.
  \bibinfo{pages}{904--913}).
\bibitem[{Zheng et~al.(2007)Zheng, Gao \& Gao}]{Zheng2007a}
\bibinfo{author}{Zheng, Q.}, \bibinfo{author}{Gao, L.~Q.}, \&
  \bibinfo{author}{Gao, Z.} (\bibinfo{year}{2007}).
\newblock \bibinfo{title}{On stability analysis of active disturbance rejection
  control for nonlinear time-varying plants with unknown dynamics}.
\newblock In {\it \bibinfo{booktitle}{46th IEEE Conference on Decision and
  Control}\/} (pp. \bibinfo{pages}{3501--3506}).
\bibitem[{Zheng et~al.(2012)Zheng, Gao \& Gao}]{Zheng2012}
\bibinfo{author}{Zheng, Q.}, \bibinfo{author}{Gao, L.~Q.}, \&
  \bibinfo{author}{Gao, Z.} (\bibinfo{year}{2012}).
\newblock \bibinfo{title}{On validation of extended state observer through
  analysis and experimentation}.
\newblock {\it \bibinfo{journal}{ASME J. Dyn. Syst., Meas., Control}\/},  {\it
  \bibinfo{volume}{134}\/}, \bibinfo{pages}{024505--1--6}.
\bibitem[{Zhong et~al.(2011)Zhong, Kuperman \& Stobart}]{Zhong2011}
\bibinfo{author}{Zhong, Q.-C.}, \bibinfo{author}{Kuperman, A.}, \&
  \bibinfo{author}{Stobart, R.} (\bibinfo{year}{2011}).
\newblock \bibinfo{title}{Design of ude-based controllers from their
  two-degree-of-freedom nature}.
\newblock {\it \bibinfo{journal}{Inter. J. of Robust and Nonlinear Contr.}\/},
  {\it \bibinfo{volume}{21}\/}, \bibinfo{pages}{1994--2008}.
\bibitem[{Zhong \& Rees(2004)}]{Zhong2004}
\bibinfo{author}{Zhong, Q.-C.}, \& \bibinfo{author}{Rees, D.}
  (\bibinfo{year}{2004}).
\newblock \bibinfo{title}{Control of uncertain {LTI} systems based on an
  uncertainty and disturbance estimator}.
\newblock {\it \bibinfo{journal}{ASME J. Dyn. Syst., Meas., Control}\/},  {\it
  \bibinfo{volume}{126}\/}, \bibinfo{pages}{905--910}.
\bibitem[{Zhou \& Doyle(1998)}]{Zhou1998}
\bibinfo{author}{Zhou, K.}, \& \bibinfo{author}{Doyle, J.~C.}
  (\bibinfo{year}{1998}).
\newblock {\it \bibinfo{title}{{E}ssentials of {R}obust {C}ontrol}\/}.
\newblock \bibinfo{publisher}{Prentice Hall Upper Saddle River, NJ}.

\end{thebibliography}

\end{document}